\definecolor{darkgreen}{rgb}{0,0.6,0}
\definecolor{darkblue}{rgb}{0,0,0.6}
\providecommand{\begeq}[1]{\begin{align*}#1\end{align*}}
\newcommand{\secrevision}[1]{#1}
\newcommand{\AW}[1]{#1}
\newcommand{\revision}[1]{#1}
\newcommand{\norm}[1]{\left\lVert #1 \right\rVert}
\newcommand{\abs}[1]{\left\lvert #1 \right\rvert}
\newcommand{\N}{\mathbb{N}}
\DeclareMathOperator{\diag}{diag}
\newcommand\Tstrut{\rule{0pt}{2.6ex}}         
\newcommand\Bstrut{\rule[-0.9ex]{0pt}{0pt}}   
\newcolumntype{C}{>{\centering\arraybackslash}X}
\theoremstyle{plain}
\newtheorem{prop}{Proposition}[section]
\theoremstyle{definition}
\newtheorem{defi}{Definition}[section]
\theoremstyle{remark}
\newtheorem{rem}{Remark}[section]
\begin{document}

\title{A General Algorithm to Calculate the Inverse\\ Principal $p$-th Root of Symmetric Positive Definite Matrices}

\author[D.~Richters et al.]{Dorothee Richters\affil{1}, Michael Lass\affil{2,3}, Andrea Walther\affil{4}, Christian Plessl\affil{2,3}, and Thomas D.~K\"uhne\affil{3,5,6}\comma\corrauth}
\address{ \affilnum{1}\ Institute of Mathematics and Center for Computational Sciences,\\ Johannes Gutenberg University of Mainz,\\ Staudinger Weg 7, D-55128 Mainz, Germany\\
          \affilnum{2}\ Department of Computer Science, Paderborn University,\\
          \affilnum{3}\ Paderborn Center for Parallel Computing, Paderborn University,\\
          \affilnum{4}\ Institute of Mathematics, Paderborn University,\\
          \affilnum{5}\ Dynamics of Condensed Matter, Department of Chemistry, Paderborn University,\\
          \affilnum{6}\ Institute for Lightweight Design, Paderborn University,\\ Warburger Str.\ 100, D-33098 Paderborn, Germany}
\email{\tt tdkuehne@mail.upb.de}

\date{\today}

\begin{abstract}
We address the general mathematical problem of computing the inverse \mbox{$p$-th} root of a given matrix in an efficient way. A new method to construct iteration functions that allow calculating arbitrary $p$-th roots and their inverses of symmetric positive definite matrices is presented. We show that the order of convergence is at least quadratic and that adjusting a parameter $q$ leads to an even faster convergence. In this way, a better performance than with previously known iteration schemes is achieved. The efficiency of the iterative functions is demonstrated for various matrices with different densities, condition numbers and spectral radii.
\end{abstract}

\ams{15A09, 15A16, 65F25, 65F50, 65F60, 65N25}
\pacs{71.15.-m, 71.15.Dx, 71.15.Pd}
\keywords{matrix $p$-th root, iteration function, order of convergence, symmetric positive definite matrices, Newton-Schulz, Altman hyperpower method}

\maketitle

\section{\label{sec:level1a}Introduction}
The first attempts to calculate the inverse of a matrix by the help of an iterative scheme were amongst others made by Schulz in the early thirties of the last century \cite{schu}. This resulted in the well-known Newton-Schulz iteration scheme that is widely used to approximate the inverse of a given matrix \cite{nr}. One of the advantages of this method is that for a particular start matrix as an initial guess, convergence is guaranteed \cite{pan}. The convergence is of order two, which is already quite satisfying, nevertheless there have been a lot of attempts to speed up this iteration scheme and to extend it to not only to calculate the inverse, but also the inverse $p$-th root of a given matrix \cite{high2, bini, iann1, iann2, smit}. This is an important task which has many applications in applied mathematics, computational physics and theoretical chemistry, where efficient methods to compute the inverse or inverse square root of a matrix are indispensable. An important example are linear-scaling 
electronic structure methods, which in many cases require to invert rather large sparse matrices in order to approximately solve the Schr\"odinger equation \cite{baroni1992, galli, PhysRevB.50.17611, ManolopoulosPalser, maslen1998, goed, PhysRevB.64.195110, kraj1, khaliullin2006, ceri1, haynes2008, ceri2, lin2009, bowler2012, kuehne, khaliullin2013, richMethod, mohr2014, PhysRevLett.112.046401, mohr2015, skylaris2016, mohr2017}. A related example is L\"owdin's method of symmetric orthogonalization \cite{loew, millam1997, niklasson2004, helgaker2007, kueh1, ochsenfeld2007, niklasson2008, we2009}, which transforms the generalized eigenvalue problem for overlapping orbitals into an equivalent problem with orthogonal orbitals, whereby the inverse square root of the overlap matrix has to be calculated. 

Common problems in the applications alluded to above, are the stability of the iteration function and its convergence. For $p \neq 1$, most of the iteration schemes have quadratic order of convergence, with for instance Halley's method being a rare exception \cite{hall, iann2, guo2}, whose convergence is of order three. Altman, however, generalized the Newton-Schulz iteration to an iterative method of inverting a linear bounded operator in a Hilbert space \cite{altm}. He constructed the so-called \emph{hyperpower method of any order of convergence} and proved that the method of degree three is the optimum one, as it gives the best accuracy for a given number of multiplications.

In this article, we describe a new general algorithm for the construction of iteration functions for the calculation of the inverse principal $p$-th root of a given matrix $A$. In this method, we have two variables, the natural number $p$ and another natural number $q\geq2$ that represents the \emph{order of expansion}. We show that two special cases of this algorithm are Newton's method for matrices \cite{iann1, iann2, guo2, smit, bini, hosk, guo1} and Altman's hyperpower method \cite{altm}.

The remainder of this article is organized as follows. In Section~\revision{\ref{sec:level2a}}, we give a short summary of the work presented in the above mentioned papers and show how we can combine this to a general expression in Section~\revision{\ref{sec:level3a}}. In Section~\revision{\ref{sec:numResults}}, we study the introduced iteration functions for both, the scalar case and for symmetric positive definite random matrices. We investigate the optimal order of expansion $q$ for matrices with different densities, condition numbers and spectral radii.

\section{\label{sec:level2a}Previous Work}
Calculating the inverse $p$-th root, where $p$ is a natural number, has been studied extensively in previous works. The characterization of the problem is quite simple. In general, for a given matrix $A\secrevision{\in\mathbb{C}^{n \times n}}$ one wants to find a matrix $B$ that fulfills $B^{-p} = A$. If $A$ is \revision{invertible}, one can always find such a matrix $B$, though $B$ \secrevision{may not be} unique. The problem of computing the $p$-th root of $A$ is strongly connected with the spectral analysis of $A$. If, for example, $A$ is a real or complex matrix of order $n$, with no eigenvalues on the closed negative real axis, $B$ \revision{always exists}~\cite{bini}. As we only deal with symmetric positive definite matrices, we can restrict ourselves to \revision{the} unique \revision{positive definite} solution, which is called the \emph{principal $p$-th root} and whose existence is guaranteed by the following theorem.

\begin{theorem}[Higham \cite{high1}, 2008]
Let $A \in \mathbb{C}^{n \times n}$ have no eigenvalues on $\mathbb{R}^-$. There is a unique $p$-th root $B$ of $A$ all of whose eigenvalues lie in the segment $\{z : -\pi/|p| < \arg(z) < \pi/|p| \}$, and it is a primary matrix function of $A$. We refer to $B$ as the principal $p$-th root of $A$ and write $B = A^{1/p}$. If $A$ is real then $A^{1/p}$ is real. \label{higham}
\end{theorem}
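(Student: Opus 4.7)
\medskip
\noindent\textbf{Proof plan.} The plan is to pass to the Jordan canonical form and reduce the statement to a scalar (and per-Jordan-block) question, since any notion of primary matrix function and of eigenvalues is most transparent there. Write $A = ZJZ^{-1}$ with $J = \diag(J_{k_1}(\lambda_1),\ldots,J_{k_m}(\lambda_m))$. By hypothesis $\lambda_j \notin \mathbb{R}^-$ for every $j$, hence each $\lambda_j$ lies in the open slit plane $\Omega := \mathbb{C} \setminus \mathbb{R}^-$. On $\Omega$ the principal branch $f(z) = z^{1/p} = \exp\bigl((\log z)/p\bigr)$, defined with $\arg(z) \in (-\pi,\pi)$, is holomorphic, satisfies $f(z)^p = z$, and sends $\Omega$ into the sector $S := \{w : |\arg w| < \pi/|p|\}$.

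For existence, I would set $B := Z f(J) Z^{-1}$, where $f(J)$ is built block by block in the standard way,
\begin{equation*}
f(J_k(\lambda)) = \sum_{j=0}^{k-1} \frac{f^{(j)}(\lambda)}{j!}\,N^j, \qquad N := J_k(\lambda) - \lambda I,
\end{equation*}
which is the definition of a primary matrix function. Then $B^p = Z f(J)^p Z^{-1} = Z J Z^{-1} = A$ (because the identity $f^p(z) = z$ transfers to the matrix calculus), and the eigenvalues of $B$ are exactly $f(\lambda_1),\ldots,f(\lambda_m) \in S$, giving the segment condition.

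For uniqueness, the hard part, I would argue as follows. Let $B$ be any $p$-th root of $A$ whose spectrum lies in $S$. Every eigenvalue $\mu$ of $B$ satisfies $\mu^p \in \sigma(A)$, and the map $w \mapsto w^p$ is injective on $S$; so $\sigma(B) = f(\sigma(A))$ and in particular the eigenvalues of $B$ are pairwise distinct whenever those of $A$ are, and more generally $\mu \neq 0$ since $A$ is nonsingular. The key lemma I would invoke here is that if $\mu \neq 0$ then a Jordan block $J_\ell(\mu)^p$ is similar to a single Jordan block $J_\ell(\mu^p)$ of the same size; consequently the Jordan structure of $B$ is forced to mirror that of $A$ block by block, and within each block $B$ must agree with the truncated Taylor series of $f$ at the corresponding eigenvalue, because commuting with $A$ (which $B$ does, since $B A = B^{p+1} = A B$) and having the prescribed spectrum pins down the upper-triangular entries uniquely. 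This identifies $B$ with $Z f(J) Z^{-1}$.

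Finally, for the reality statement, I would use that when $A$ is real its non-real eigenvalues come in conjugate pairs with Jordan blocks of equal size, and that the principal branch $f$ satisfies $f(\bar z) = \overline{f(z)}$ on $\Omega$; hence $\overline{B} = B$ after choosing $Z$ compatibly (equivalently, $B = \overline{B}$ follows from the integral representation $B = \frac{1}{2\pi i}\oint_\Gamma f(z)(zI-A)^{-1}\,dz$ over a contour $\Gamma \subset \Omega$ symmetric under conjugation). The most delicate point of the whole argument is the uniqueness step, specifically the claim that the non-primary $p$-th roots of a single Jordan block $J_k(\lambda)$ all have spectra that spread over several different $p$-th roots of $\lambda$ and therefore leave $S$; everything else is bookkeeping once this is in hand.
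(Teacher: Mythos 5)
The paper itself offers no proof of this theorem --- it is quoted from Higham's book \cite{high1} --- so the only comparison available is with the standard argument given there, and your plan follows that route in outline: Jordan form, the principal branch $f(z)=z^{1/p}$ applied blockwise, and a commutation argument for uniqueness. The existence and reality parts of your sketch are sound. The gap is in the uniqueness step. Since $B$ commutes with $A$, it preserves the generalized eigenspaces of $A$, but it does \emph{not} in general preserve the individual Jordan chains when $A$ is derogatory (several Jordan blocks sharing one eigenvalue), so your claim that commutation ``pins down the upper-triangular entries'' \emph{within each block} is not justified: $B$ need not be upper triangular in the Jordan basis of $A$ at all, and the derogatory case is precisely where uniqueness is at stake, because it is the only situation in which non-primary roots exist. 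Relatedly, the point you single out as most delicate is vacuous: a single nonsingular Jordan block $J_k(\lambda)$ has \emph{no} non-primary $p$-th roots --- being nonderogatory, its commutant consists of polynomials in $J_k(\lambda)$, so its $p$-th roots are exactly the $p$ primary ones, each with a one-point spectrum. What actually has to be excluded is a root of $A$ that assigns different branches of $\lambda^{1/p}$ to different Jordan blocks sharing the same eigenvalue $\lambda$.

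The step can be repaired along standard lines. Let $W$ be the generalized eigenspace of $A$ for $\lambda\in\sigma(A)$. Then $B$ preserves $W$, and every eigenvalue $\mu$ of $B|_W$ satisfies $\mu^p=\lambda$ and $\mu\in S$, so the only eigenvalue of $B|_W$ is $f(\lambda)$ and $B|_W=f(\lambda)I+N$ with $N$ nilpotent. Expanding $(f(\lambda)I+N)^p=\lambda I+N_A$, where $N_A=A|_W-\lambda I$, gives $N_A=c_1N+c_2N^2+\dots+c_pN^p$ with $c_1=p\,f(\lambda)^{p-1}\neq0$; substituting this relation into itself repeatedly (the process terminates by nilpotency) expresses $N$ uniquely as a polynomial in $N_A$. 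Hence $B|_W$ is uniquely determined and coincides with the restriction of the primary root, and assembling over all generalized eigenspaces yields $B=Zf(J)Z^{-1}$. Alternatively one can invoke Higham's classification of all solutions of $X^p=A$ (Theorem~1.26 in \cite{high1}); your spectral condition then forces the principal branch on every block and excludes the non-primary roots. For the case $p<0$ mentioned in the paper's remark, apply the positive-integer case to $A^{-1}$, which likewise has no eigenvalues on $\mathbb{R}^-$.
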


\begin{rem}
Here, $p < 0$ is also included, so that \textup{Theorem \ref{higham}} also holds for the calculation of inverse $p$-th roots.
\end{rem}

\noindent The calculation of such a root is usually done by the help of an iteration function, since the brute-force computation is computationally very demanding or even infeasible for large matrices. For sparse matrices, iteration functions can even reduce the computational scaling with respect to the size of the matrix because values in intermediately occurring matrices \revision{are often} truncated to retain sparsity.
However, one should always keep in mind that the inverse $p$-th roots of sparse matrices are in general not sparse anymore, but usually full matrices.

One of the most discussed iteration schemes for computing the $p$-th root of a matrix is based on Newton's method for finding roots of functions. It is possible to approximate a zero $\hat x$ of $f: \mathbb{R} \to \mathbb{R}$, meaning that we have $f(\hat x) = 0$, by the iteration
\begeq{x_{k+1} = x_k - \frac{f(x_k)}{f'(x_k)}.}
Here, $x_k \to \hat x$ for $k \to \infty$ if $x_0$ is an appropriate initial guess. If, for an arbitrary $a$, $f(x) = x^p-a$, then 
\begeq{x_{k+1} = \frac{1}{p} \left[ (p-1)x_k + ax_k^{1-p}  \right]}
and $x_k \to a^{1/p}$ for $k \to \infty$. One can also deal with matrices and study the resulting rational matrix function $F(X) = X^p-A$ \revision{\cite{hosk,smit}.}
This has been the subject of a variety of previous papers \cite{bini, pan, high2, iann1, iann2, smit, guo1, guo2, laki, psar, beni, petr, hosk, high1, hall}. It is clear that this iteration converges to the $p$-th root of the matrix $A$ if $X_0$ is chosen close enough to the true root.
Smith also showed that Newton's method for matrices has issues concerning numerical stability for ill-conditioned matrices~\cite{smit}, \revision{al}though this is not the topic of the present work.

Bini et al. \cite{bini} proved that the matrix iteration 
\begin{align}
B_{k+1} = \frac{1}{p} \left[(p+1) B_k - B_k^{p+1}A\right], \quad B_0 \in \secrevision{\mathbb{C}^{n \times n}} \label{eqn:q2p-1}
\end{align}
converges quadratically to the inverse $p$-th root $A^{-1/p}$ if \revision{$A$ is positive definite, $B_0A = AB_0$ and \mbox{$\norm{I-B_0^pA} < 1$}. Here, $\norm{\cdot}$ denotes some consistent matrix norm. Bini et al.\ further proved the following}
\revision{
\begin{prop}
\label{prop}
Suppose that all the eigenvalues of $A$ are real and positive. The iteration function (\ref{eqn:q2p-1}) with $B_0 = I$ converges to $A^{-1/p}$ if the spectral radius $\rho(A) < p+1$. If $\rho(A) = p+1$ the iteration does not converge to the inverse of any $p$-th root of $A$.
\end{prop}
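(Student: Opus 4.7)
The plan is to reduce the matrix recursion to a scalar one on each eigenvalue of $A$. Because $B_0 = I$ commutes with $A$, an easy induction shows that every iterate $B_k$ is a polynomial in $A$, so $A$ and all $B_k$ share a Jordan basis and the iteration decouples along it. It therefore suffices to study, for each real positive eigenvalue $a$ of $A$, the scalar recursion $b_{k+1} = \tfrac{1}{p}\bigl[(p+1)b_k - a\, b_k^{p+1}\bigr]$ with $b_0 = 1$, and then lift the outcome back to the matrix level via matrix-function calculus.

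The key substitution is $y_k := a\, b_k^p$; a short calculation gives
$$y_{k+1} = \phi(y_k) := \frac{y_k(p+1-y_k)^p}{p^p}, \qquad y_0 = a,$$
and $b_k \to a^{-1/p}$ if and only if $y_k \to 1$. Differentiating yields $\phi'(y) = (p+1)(1-y)(p+1-y)^{p-1}/p^p$, so on $[0, p+1]$ the map $\phi$ increases from $\phi(0) = 0$ to $\phi(1) = 1$ and then decreases back to $\phi(p+1) = 0$, with $\phi'(1) = 0$ reflecting the already-known quadratic rate at the attracting fixed point $1$.

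For $\rho(A) < p+1$, every eigenvalue lies in $(0, p+1)$, so $y_1 = \phi(a) \in (0, 1]$. On $(0, 1)$ the elementary inequality $(p+1-y)^p > p^p$ gives $\phi(y) > y$, so the sequence $\{y_k\}_{k\ge 1}$ is monotone nondecreasing and bounded above by $1$, hence converges to the only fixed point of $\phi$ in $(0, 1]$, namely $1$. Lifted back through the matrix function, this yields $B_k \to A^{-1/p}$. When $\rho(A) = p+1$ instead, some eigenvalue equals $p+1$; then $y_1 = \phi(p+1) = 0$ and $y_k = 0$ for all $k \geq 1$, forcing $b_k = 0$ in that eigencomponent. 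Since no $p$-th root of the nonzero number $p+1$ vanishes, no limit point of $B_k$ can agree with the inverse of any $p$-th root of $A$.

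The routine part is the monotone-boundedness analysis of $\phi$. The main obstacle I anticipate is promoting scalar convergence to matrix convergence when $A$ has nontrivial Jordan blocks: here one writes $B_k = f_k(A)$ for the polynomial $f_k$ obtained by iterating the scalar map from the constant $1$, and applies the standard matrix-function criterion that $f_k(A) \to g(A)$ whenever $f_k^{(j)}(\lambda) \to g^{(j)}(\lambda)$ for each eigenvalue $\lambda$ and each derivative order below the size of its largest Jordan block. Quadratic local contraction at $y = 1$ (i.e.\ $\phi'(1) = 0$) guarantees convergence of the higher derivatives as well, so the matrix claim reduces cleanly to the scalar analysis above.
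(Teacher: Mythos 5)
Your proposal is correct in substance, but note that the paper itself offers no proof of this proposition: it is quoted from Bini et al.~\cite{bini}, so there is no in-paper argument to compare against. Your strategy --- write $B_k=f_k(A)$ for polynomials $f_k$ obtained by iterating the scalar map from $f_0\equiv 1$, analyze the scalar recursion at each eigenvalue, and lift back through the Jordan structure --- is essentially the standard spectral-reduction route used in the original reference. The scalar core is sound: with $y_k=a\,b_k^p$ one indeed gets $y_{k+1}=\phi(y_k)$ with $\phi(y)=y(p+1-y)^p/p^p$, $\phi$ maps $[0,p+1]$ into $[0,1]$, $\phi(y)>y$ on $(0,1)$, and $y=1$ is the only fixed point in $(0,1]$, so $y_k\to 1$ for every eigenvalue $a\in(0,p+1)$; and for $\rho(A)=p+1$ the observation $f_k(p+1)=0$ for $k\ge 1$ makes every limit point of $B_k$ singular, which is exactly what the second assertion requires.

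Two steps should be tightened. First, ``$b_k\to a^{-1/p}$ if and only if $y_k\to 1$'' is not literally an equivalence when $p$ is even: $y_k\to 1$ alone only gives $|b_k|\to a^{-1/p}$. You need $b_k>0$ for all $k$, which is immediate from $b_{k+1}=\tfrac{b_k}{p}(p+1-y_k)$ together with $y_0=a<p+1$ and $y_k\le 1<p+1$ for $k\ge 1$, but it has to be said. Second, in the Jordan-block step the conclusion is right but the stated mechanism (``quadratic local contraction at $y=1$ guarantees convergence of the higher derivatives'') is not what actually does the work. Differentiating the recursion $f_{k+1}(x)=\tfrac1p[(p+1)f_k(x)-x f_k(x)^{p+1}]$ $j$ times and evaluating at an eigenvalue $\lambda$ gives $f_{k+1}^{(j)}(\lambda)=\tfrac{p+1}{p}\,r_k(\lambda)\,f_k^{(j)}(\lambda)+(\text{terms involving derivatives of order}<j)$, where $r_k(\lambda)=1-\lambda f_k(\lambda)^p\to 0$; this is an asymptotically contracting linear recursion whose inhomogeneous part converges, by induction on $j$, to the value obtained by differentiating the fixed-point identity satisfied by $g(x)=x^{-1/p}$ (using $1-\lambda g(\lambda)^p=0$), so $f_k^{(j)}(\lambda)\to g^{(j)}(\lambda)$ and the matrix-function convergence criterion applies. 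With these two details written out, the proof is complete.
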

}

In his work dated back to 1959, Altman described the hyperpower method \cite{altm}. Let $V$ be a Banach space \revision{with norm $\norm{\cdot}$}, $A: V \to V$ a linear, bounded and \revision{invertible} operator, while $B_0 \in V$ is an approximate reciprocal of $A$ satisfying $\norm{I - AB_0} < 1$. For the iteration 
\begin{align}
B_{k+1} = B_k(I + R_k + R_k^2 + \ldots + R_k^{q-1}), \label{eq:alt}
\end{align}
the sequence $(B_k)_{k \in \mathbb{N}_0}$ converges towards the inverse of $A$. Here, $R_k = I-B_k^pA$ is the $k$-th residual.
Altman proved that the natural number $q$ corresponds to the order of convergence of Eq.~\revision{(\ref{eq:alt})}, so that in principle a method of any order can be constructed. He defined the optimum method as the one that gives the best accuracy for a given number of multiplications and demonstrated that the optimum method is obtained for $q=3$.

To close this section, we recall some basic definitions, which are crucial for the next section.
In the following, the iteration function $\varphi: \mathbb{C}^{n\times n} \to \mathbb{C}^{n\times n}$ is assumed to be sufficiently often continuously differentiable.

\revision{
\begin{defi}
Let $A\in \mathbb{C}^{m\times n}$. The matrix norms $\norm{\cdot}_1$, $\norm{\cdot}_\infty$ and $\norm{\cdot}_2$ denote the norms induced by the corresponding vector norm, such that
\begin{align*}
\norm{A}_1 &= \max_{\norm{x}_1 = 1} \norm{Ax}_1 = \max_{1\leq j\leq n} \sum_{i=1}^m \abs{a_{ij}},\\
\norm{A}_\infty &= \max_{\norm{x}_\infty = 1} \norm{Ax}_\infty = \max_{1\leq i\leq m} \sum_{j=1}^n \abs{a_{ij}},\\
\norm{A}_2 &= \max_{\norm{x}_2 = 1} \norm{Ax}_2 = \sqrt{\lambda_\text{max}(A^* A)},
\end{align*}
where $\lambda_\text{max}$ denotes the largest eigenvalue and $A^*$ is the conjugate transpose of $A$.
\end{defi}
}

\begin{defi}
Let $\varphi: \mathbb{C}^{n\times n} \to \mathbb{C}^{n\times n} $ be an iteration function. The process
\begin{align}
\label{eq:phi} B_{k+1} = \varphi(B_k), \quad k = 0,1,\ldots
\end{align}
is called \emph{convergent to} $Z \in \mathbb{C}^{n\times n}$, if for all start matrices $B_0 \in \mathbb{C}^{n\times n}$ \revision{fullfilling suitable conditions},
we have $\norm{B_k-Z}_{\revision{2}} \to 0$ for $k \to \infty$. \label{defi1}
\end{defi}
\begin{defi}
A \emph{fixed point} $Z$ of the iteration function in
Eq.~\revision{(\ref{eq:phi})} is such that $\varphi(Z) = Z$ and is said
to be \emph{attractive} if $\norm{\varphi^{\prime}(Z)}_{\revision{2}} < 1$. \label{defi2}
\end{defi}

\begin{defi}
Let $\varphi: \mathbb{C}^{n\times n} \to \mathbb{C}^{n\times n}$ be an iteration function with fixed point $Z \in  \mathbb{C}^{n\times n}$.
\secrevision{Let $q \in \mathbb{N}$ be the largest number such that} for all start matrices $B_0 \in \mathbb{C}^{n\times n}$ 
\revision{fullfilling suitable conditions} we have 
\begin{align*} 
\norm{B_{k+1}-Z}_{\revision{2}} \leq c \norm{B_k-Z}_{\revision{2}}^q \textup{ for } k = 0,1,\ldots, 
\end{align*}
where $c >0$ is a constant with $c < 1$ if $q=1$. \secrevision{The iteration function in Eq.~\revision{(\ref{eq:phi})} is called \emph{convergent of order} $q$.} \label{defi3}
\end{defi}

\noindent We also want to remind a well-known theorem concerning the order of convergence.
\begin{theorem} Let \revision{$\varphi: \mathbb{C}^{n\times n} \to \mathbb{C}^{n\times n} $} be a function with fixed point \revision{$Z \in  \mathbb{C}^{n\times n}$}. If \revision{$\varphi$} is $q$-times continuously differentiable in a neighborhood of \revision{$Z$} with $q \geq 2$, then \revision{$\varphi$} has order of convergence $q$, if and only if
\revision{
\begin{align*}
\varphi^{\prime}(Z) = \ldots =  \varphi^{(q-1)}(Z) = 0 \textup{ and } \varphi^{(q)}(Z) \neq 0.
\end{align*}
}
\label{gautschi}
\end{theorem}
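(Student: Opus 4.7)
The plan is to expand $\varphi$ in a Taylor series about the fixed point $Z$ and compare term by term with the bound defining the order of convergence. Since $\varphi$ is $q$-times continuously differentiable on a neighborhood of $Z$, each derivative $\varphi^{(j)}(Z)$ is a symmetric $j$-linear map on $\mathbb{C}^{n\times n}$, and writing $H = B-Z$ one has
\[
\varphi(B) - \varphi(Z) \;=\; \sum_{j=1}^{q-1} \tfrac{1}{j!}\,\varphi^{(j)}(Z)\,H^{j} \;+\; \tfrac{1}{q!}\,\varphi^{(q)}(Z)\,H^{q} \;+\; o(\|H\|_2^{q}),
\]
where $\varphi^{(j)}(Z)H^{j}$ denotes the $j$-linear form applied to $j$ copies of $H$, and I have used $\varphi(Z)=Z$. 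Setting $H=B_k-Z$ identifies the left-hand side with $B_{k+1}-Z$.

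For the $(\Leftarrow)$ direction, I would substitute $\varphi'(Z)=\ldots=\varphi^{(q-1)}(Z)=0$ into the expansion so that only the $q$-th term plus remainder remains. Passing to norms and using continuity of $\varphi^{(q)}$ on a small closed ball around $Z$, I obtain a uniform estimate $\|B_{k+1}-Z\|_2 \leq c\|B_k-Z\|_2^{q}$ with $c$ independent of $k$, which shows the order is at least $q$. To rule out a higher order, I use $\varphi^{(q)}(Z)\neq 0$: by the polarization identity for symmetric multilinear forms, there exists a direction $V\in\mathbb{C}^{n\times n}$ with $\varphi^{(q)}(Z)V^{q}\neq 0$, and choosing start matrices $B_0 = Z+\eta V$ for small $\eta$ gives $\|B_1-Z\|_2 \geq C\eta^{q}$, which for $\eta\to 0$ is incompatible with any bound of the form $\|B_1-Z\|_2 \leq c'\eta^{q+1}\|V\|_2^{q+1}$.

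For the $(\Rightarrow)$ direction I would argue by contraposition. If some $\varphi^{(j^*)}(Z)\neq 0$ with $j^*<q$ chosen minimal, the Taylor expansion reduces (to leading order) to $B_{k+1}-Z = \tfrac{1}{j^*!}\varphi^{(j^*)}(Z)(B_k-Z)^{j^*} + o(\|B_k-Z\|_2^{j^*})$; choosing $B_0$ in a direction where $\varphi^{(j^*)}(Z)$ does not vanish makes $\|B_1-Z\|_2$ grow like $\|B_0-Z\|_2^{j^*}$, and since $j^*<q$ this violates the bound with exponent $q$ as $\|B_0-Z\|_2\to 0$. If instead $\varphi^{(q)}(Z)=0$ and all lower derivatives vanish, Taylor's theorem only yields $\|B_{k+1}-Z\|_2 = o(\|B_k-Z\|_2^{q})$, so the constant $c$ in the bound of Definition~\ref{defi3} can be made arbitrarily small; invoking (slightly more) smoothness of $\varphi$ one upgrades this to an estimate with exponent $q+1$, contradicting the maximality of $q$.

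The main obstacle is the multilinear bookkeeping: unlike the scalar setting, $\varphi^{(j)}(Z)$ is a symmetric $j$-linear operator on matrices rather than a number, and it is not a priori obvious that "$\varphi^{(j)}(Z)\neq 0$ as a multilinear map" implies "$\varphi^{(j)}(Z)V^{j}\neq 0$ for some single direction $V$". This is exactly where polarization is used, and a careful application of it is what makes both the upper-bound direction of ($\Leftarrow$) sharp and the obstruction in ($\Rightarrow$) accessible by a judicious choice of start matrix.
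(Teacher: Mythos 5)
The paper does not actually prove this theorem: its ``proof'' is a citation of the scalar-case argument in Gautschi, pp.~278--279. Your attempt to give a genuine Fr\'echet--Taylor argument on $\mathbb{C}^{n\times n}$ is therefore a different and more ambitious route, and most of it is sound. The sufficiency direction works: Taylor's theorem with $\varphi'(Z)=\ldots=\varphi^{(q-1)}(Z)=0$ and continuity of $\varphi^{(q)}$ on a small closed ball gives $\norm{B_{k+1}-Z}_2\le c\,\norm{B_k-Z}_2^q$ locally (after shrinking the ball so the iterates stay inside), and your use of polarization is exactly the right tool: a symmetric $q$-linear map that vanishes on the diagonal vanishes identically, so $\varphi^{(q)}(Z)\neq 0$ yields a direction $V$ with $\varphi^{(q)}(Z)V^q\neq 0$, which rules out any exponent-$(q+1)$ bound along $B_0=Z+\eta V$, $\eta\to 0$. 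The same diagonal argument handles the sub-case $j^*<q$ of the necessity direction.

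The genuine gap is the remaining sub-case of necessity, $\varphi'(Z)=\ldots=\varphi^{(q)}(Z)=0$. From $C^q$ smoothness alone you only get $\norm{\varphi(B)-Z}_2=o\left(\norm{B-Z}_2^q\right)$, and this cannot be upgraded to an exponent-$(q+1)$ estimate; ``invoking (slightly more) smoothness'' is not licensed, because the hypothesis is exactly $q$-fold continuous differentiability. In fact the implication you are trying to establish there is false under Definition~\ref{defi3} with only $C^q$ regularity: in the scalar case take $\varphi(x)=Z+g(x-Z)$ with $g(t)=t^q/\log\bigl(1/\abs{t}\bigr)$ for small $t\neq 0$ and $g(0)=0$. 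One checks $g\in C^q$ near $0$ with $g'(0)=\ldots=g^{(q)}(0)=0$, yet $\abs{\varphi(x)-Z}\le c\abs{x-Z}^q$ holds near $Z$ for every $c>0$ while no bound with exponent $q+1$ holds, so the order in the sense of Definition~\ref{defi3} is exactly $q$ although $\varphi^{(q)}(Z)=0$. To close this case you must either strengthen the hypothesis to $C^{q+1}$ (or analyticity), or replace Definition~\ref{defi3} by the asymptotic notion of order (the ratio $\norm{B_{k+1}-Z}_2/\norm{B_k-Z}_2^{\,q}$ tending to a nonzero constant), which is the setting in which Gautschi's cited proof is carried out. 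As written, your contraposition rigorously covers only one of the two failure modes.
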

\begin{proof}The proof can be found in the book of Gautschi \cite[pp. 278--279]{gaut}. \end{proof}

\section{\label{sec:level3a}\secrevision{Generalization of the Problem}}
In this section, we present a new algorithm to construct iteration schemes for the computation of the inverse principal $p$-th root of a matrix, which contains Newton's method for matrices and the hyperpower method as special cases. Specifically, we propose a general expression to compute the inverse $p$-th root of a matrix that includes an additional variable $q$ as the order of expansion. In Altman's case, $q$ is the order of convergence, but as we will see, this does not hold generally for the iteration functions as constructed using our algorithm. Nevertheless, choosing $q$ larger than two often leads to an increase in performance, meaning that fewer iterations, matrix multiplications and therefore computational effort is required. In Section~\ref{sec:numResults} we discuss how $q$ can be chosen adaptively.

The central statement of this article is the following
\begin{theorem} \label{them}
Let $A \in \mathbb{C}^{n \times n}$ be an invertible matrix and 
$p, q \in \mathbb{N} \setminus \{0 \}$ with $q \ge 2$. 
Setting 
\begin{align}
  \varphi : \mathbb{C}^{n \times n} \rightarrow \mathbb{C}^{n \times n}, \quad X \mapsto \frac{1}{p} [ (p-1) X- ((I-X^p A)^q -I) X^{1-p} A^{-1}],
  \label{eq:phi1}
\end{align}
we define the iteration
\begin{equation}
  \label{eq:def_pro}
B_0 \in \mathbb{C}^{n \times n}, \qquad B_{k+1} = \varphi (B_k) = \frac{1}{p} \left[ (p-1) B_k- ((I-B^p_k A)^q -I) B^{1-p}_k A^{-1}\right]. 
\end{equation}
If $B_0 A=AB_0$ and for $R_0 = I-AB^p_0$ the inequality
\begin{equation}
\label{eq-1}
\left \|R_0^q-\frac{1}{p^p}\sum_{i=2}^p\binom{p}{i}p^{p-i}R_0^{i-1}\left(I-R_0\right)^{1-i}\left(I-R_0^q\right)^i\right\|_2=:c<1
\end{equation}
holds then one has
\[
B_k A=AB_k \quad\forall k\in\N\qquad \mbox{ and } \qquad \lim\limits_{k \rightarrow \infty} B_k = A^{-1/p}.
\]
If $p > 1$,  the order of convergence of the iteration defined by Eq.~\revision{\eqref{eq:def_pro}} is quadratic. If $p = 1$, the order of convergence of Eq.~\revision{\eqref{eq:def_pro}} is equal to $q$.
\end{theorem}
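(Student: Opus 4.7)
\emph{Step 1 (commutativity and residual recurrence).} The map $\varphi$ in Eq.~\eqref{eq:phi1} is a polynomial in $X$, $A$, and $A^{-1}$; since $B_0A=AB_0$ forces $B_0$ to commute with $A^{-1}$ as well, a one-line induction on $k$ gives $B_kA=AB_k$ for every $k\in\N$. Thus the iteration lives entirely in the commutative subalgebra generated by $A$ and $B_0$, and $R_k:=I-B_k^pA=I-AB_k^p$ is well defined. Using $B_k^pA=I-R_k$ together with $B_k^{1-p}A^{-1}=B_k(I-R_k)^{-1}$ (valid whenever $I-R_k$ is invertible), I would rewrite the iteration in the factored form
\begin{align*}
B_{k+1}\;=\;\frac{B_k\,(I-R_k)^{-1}}{p}\bigl[\,pI-(p-1)R_k-R_k^q\,\bigr],
\end{align*}
then raise to the $p$-th power, multiply by $A$, and apply the binomial theorem. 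Collecting the $i=0$ and $i=1$ contributions into $I-R_k^q$ produces a closed-form polynomial recurrence $R_{k+1}=\Psi(R_k)$, in which the quantity on the left of~\eqref{eq-1} is exactly $\Psi(R_0)=R_1$, so the hypothesis reads $\norm{R_1}_2=c<1$.

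\emph{Step 2 (convergence).} From the explicit form of $\Psi$ one extracts a continuous function $K\colon[0,1)\to\mathbb{R}_{\ge 0}$ with $\norm{\Psi(R)}_2\le K(\norm{R}_2)\,\norm{R}_2^{\min(2,q)}$ whenever $\norm{R}_2<1$. Combined with $\norm{R_1}_2\le c<1$ and the fact that $\min(2,q)\ge 2$, an induction on $k$ shows that $\norm{R_k}_2\le c$ for all $k\ge 1$ and $\norm{R_k}_2\to 0$ superlinearly. Hence $B_k^pA\to I$, and since each $B_k$ commutes with $A$ while the principal $p$-th root is a continuous primary matrix function on a neighborhood of $I$ (Theorem~\ref{higham}), one concludes $B_k\to A^{-1/p}$.

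\emph{Step 3 (order of convergence).} For $p=1$ the iteration collapses to $\varphi(X)=(I-(I-XA)^q)A^{-1}$, and the substitution $I-XA=-(X-A^{-1})A$ yields
\begin{align*}
\varphi(X)-A^{-1}\;=\;(-1)^{q+1}\,(X-A^{-1})^q\,A^{q-1},
\end{align*}
certifying order exactly $q$ in the $\norm{\cdot}_2$ sense. For $p>1$ I would invoke Theorem~\ref{gautschi} at the fixed point $Z=A^{-1/p}$. Writing $f(X):=((I-X^pA)^q-I)X^{1-p}A^{-1}$, the product rule gives $f'(Z)=(p-1)I$: the derivative of $(I-X^pA)^q$ at $Z$ vanishes because $I-Z^pA=0$ and $q\ge 2$, while $Z^{-p}A^{-1}=I$. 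Hence $\varphi'(Z)=\frac{1}{p}[(p-1)I-(p-1)I]=0$, which already yields at least quadratic convergence; a short expansion of $\Psi$ from Step~1 then shows the quadratic coefficient to be $(p-1)/(2p)$ (when $q\ge 3$) or $(p+1)/(2p)$ (when $q=2$), both nonzero for $p>1$, so the order is exactly two.

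\emph{Main obstacle.} The algebra of Step~1 and the differentiations of Step~3 are mechanical; the genuine difficulty is Step~2. The map $\Psi$ contains the inverse factor $(I-R_k)^{-1}$ and is not a global contraction, so the single-step inequality~\eqref{eq-1} has to be leveraged both to keep $\norm{R_k}_2$ uniformly below $1$ along the orbit and to trigger superlinear decay from the next step onward. The cleanest route is to exploit the commutative structure of Step~1 to reduce, via functional calculus on $A$, to the scalar iteration $r\mapsto\Psi(r)$, where $r=0$ is a superattracting fixed point; the scalar basin-of-attraction estimate then transfers back to the required operator-norm bound on the matrix iterates.
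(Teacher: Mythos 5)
Your Step 1 and Step 3 follow essentially the paper's route (commutativity by induction; the factored form $B_{k+1}=\frac1p B_k(I-R_k)^{-1}[pI-(p-1)R_k-R_k^q]$ is equivalent to the paper's representation $B_{k+1}=\frac1p B_k[pI+R_k+\cdots+R_k^{q-1}]$; and the derivative argument via Theorem~\ref{gautschi}, the explicit order-$q$ estimate for $p=1$, and the nonvanishing second-order coefficient for $p>1$ all match what the paper does). The problems are concentrated in your reading of the hypothesis and in Step 2. First, the identification of the left-hand side of \eqref{eq-1} with $R_1$ is wrong for $p\ge 2$: the relation the algebra actually produces (cf.\ Eq.~\eqref{eq:R1}) is $R_1=R_0\,M_0$, where $M_0$ is the bracketed matrix whose norm is called $c$. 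So the hypothesis is the one-step contraction estimate $\norm{R_1}_2\le c\,\norm{R_0}_2$, not the statement ``$\norm{R_1}_2=c<1$''; the two coincide only for $p=1$, where the sum in \eqref{eq-1} is empty. This is not cosmetic, because everything you do afterwards builds only on $\norm{R_1}_2<1$.

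Second, the convergence argument itself has a genuine gap. From a bound of the form $\norm{\Psi(R)}_2\le K(\norm{R}_2)\,\norm{R}_2^{2}$ with $K$ merely continuous on $[0,1)$, together with $\norm{R_1}_2\le c<1$, the induction ``$\norm{R_k}_2\le c$ for all $k\ge1$'' does not follow: you would additionally need $K(c)\,c\le 1$, and this can fail. Indeed, $\norm{R}_2<1$ alone does not guarantee $\norm{\Psi(R)}_2<1$ for general $(p,q)$ --- this is exactly what Fig.~\ref{fig:illu_norm} illustrates and the very reason the complicated condition \eqref{eq-1} is imposed instead of $\norm{R_0}_2<1$. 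You flag this as the ``main obstacle,'' but the proposed repair (functional calculus reducing to the scalar iteration) is not carried out, and as stated it needs more than $B_0A=AB_0$: for a general invertible $A\in\mathbb{C}^{n\times n}$ the pair $(A,B_0)$ need not be simultaneously diagonalizable, and for non-normal matrices the spectral data of $R_k$ does not control $\norm{R_k}_2$, so a scalar basin-of-attraction estimate does not transfer directly to the required 2-norm bounds. The paper instead uses the hypothesis as a per-step contraction factor, $\norm{R_1}_2\le c\norm{R_0}_2$, and propagates it inductively to $\norm{R_{k+1}}_2< c\norm{R_k}_2$, which yields linear decay of the residuals and hence $B_k\to A^{-1/p}$; your proposal never establishes a statement of this kind, so the central convergence claim remains unproved.
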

\begin{rem}
One can use the same formula to calculate the $p$-th root of $A$, where $p$ is negative or even choose $p \in \mathbb{Q}\setminus\{0\}$. However, this is not a competitive method because for negative $p$, one has to compute inverse matrices in every iteration step and for non-integers the binomial theorem and the calculation of powers of matrices becomes more demanding. \revision{In the following, we therefore always assume $p\in\mathbb{N}\setminus\{0\}$.}
\end{rem}
\begin{proof}
First, we show that $B_k A=AB_k$ holds for all $k \in \mathbb{N}$.
For this purpose, we rearrange
\begin{align*}
((I-X^pA)^q -I)&(X^pA)^{-1}  = \left(\sum^q_{i=0} \begin{pmatrix} q \\ i \end{pmatrix}
(-1)^i (X^pA)^i -I \right) (X^pA)^{-1}\\
& = \left( \sum^q_{i=1} \begin{pmatrix} q \\ i \end{pmatrix} (-1)^i (X^pA)^i \right) 
(X^pA)^{-1}
= \sum^q_{i=1} \begin{pmatrix} q \\ i \end{pmatrix} (-1)^i(X^pA)^{i-1}. 
\end{align*}
Now assume that $B_k A= A B_k$ holds for a fixed $k \in \mathbb{N}$. 
Then it follows for $B_{k+1}$ that 
\begin{align*}
B_{k+1} A &= \frac{1}{p}[(p-1)B_k-((I-B^p_k A)^q -I)B^{1-p}_k A^{-1}]A \\
&= \frac{1}{p} \left[(p-1)I- \sum^q_{i=1} \begin{pmatrix} q \\ i \end{pmatrix}(-1)^iA^{i-1}B^{p(i-1)}_k\right]B_k A \\
&= A \left( \frac{1}{p} \left[ (p-1)I- \sum^q_{i=1} \begin{pmatrix} q \\ i \end{pmatrix} (-1)^i A^{i-1} B^{p(i-1)}_k \right] B_k \right) = A B_{k+1}
\end{align*}
yielding the first part of the assertion. 
Next, we prove that with $R_k \equiv I- AB^p_k$ one has 
\begin{equation}
  \label{eq:Bk}
B_{k+1} = \frac{1}{p} B_k [pI+R_k+ \ldots + R^{q-1}_k].
\end{equation}
This will be shown by an induction over $q$. 
For $q=2$, one obtains
\begin{align*}
B_{k+1} &= \frac{1}{p} \left[(p-1)B_k-((I-B^p_kA)^2-I) B^{1-p}_k A^{-1}\right] \\
&= \frac{1}{p} \left[(p-1)B_k-(-2B^p_kA+B^{2p}_kA^2)B^{1-p}_kA^{-1}\right] 
= \frac{1}{p} B_k \left[(p-1)I+2I-B^p_kA \right] \\
&= \frac{1}{p} B_k [pI+R_k].
\end{align*}
Now assume that Eq.~\eqref{eq:Bk} holds for a fixed $q-1 \in \mathbb{N}$.
This yields for $q \in \mathbb{N}$ that 
\begin{align*}
B_{k+1} &= \frac{1}{p} [(p-1)B_k-((I-B^p_kA)^q-I)B^{1-p}_k A^{-1}] \\
&= \frac{1}{p} \left[(p-1)B_k-((I-B^p_kA)^{q-1}(I-B^p_kA)-I)B^{1-p}_k A^{-1}\right] \\
&= \frac{1}{p} \left[(p-1)B_k-((I-B^p_kA)^{q-1}-I)B^{1-p}_kA^{-1}  + (I-B^p_kA)^{q-1} B^p_kA B^{1-p}_k A^{-1} \right] \\
&= \frac{1}{p} B_k [pI+R_k + \ldots + R^{q-2}_k] + \frac{1}{p} (I-B^p_kA)^{q-1}B_k= \frac{1}{p} B_k [pI+R_k + \ldots + R^{q-2}_k] + \frac{1}{p} B_k R^{q-1}_k \\
&= \frac{1}{p} B_k [pI+R_k + \ldots + R^{q-1}_k]. 
\end{align*}
Now, everything is prepared to show the second assertion. 
Using the interchangeability of the matrices $B_0$ and $A$ as well as the binomial theorem, one has
\begin{align}
  R_{1}& = I - \frac{1}{p^p} (I-R_0)(pI+R_0 + \ldots + R^{q-1}_0)^p \label{eq:R11}\\
  & = I- \frac{1}{p^p} 
(I-R_0)\left(pI+ R_0(I-R_0)^{-1}(I-R_0^q)\right)^p\nonumber \\
& = I - \frac{1}{p^p} (I-R_0)\left(\sum_{i=0}^p\binom{p}{i}p^{p-i} R_0^i(I-R_0)^{-i}(I-R_0^q)^i\right)\nonumber\\
& = I \!-\! \frac{1}{p^p} (I-R_0)\left(\!p^pI+p\,p^{p-1}R_0(I-R_0)^{-1}(I-R_0^q)+\sum_{i=2}^p\binom{p}{i}p^{p-i} R_0^i(I-R_0)^{-i}(I-R_0^q)^i\!\right)\nonumber\\
& = R_0^{q+1}-\frac{1}{p^p}\sum_{i=2}^p\binom{p}{i}p^{p-i} R_0^i(I-R_0)^{1-i}(I-R_0^q)^i\nonumber\\
& = R_0\left(R_0^q-\frac{1}{p^p}\sum_{i=2}^p\binom{p}{i}p^{p-i} R_0^{i-1}(I-R_0)^{1-i}(I-R_0^q)^i\right) \label{eq:R1}
\end{align}
Taking norms on both sides, one obtains for $k=0$
\[
\| R_1 \|_2 \le  \| R_0 \|_2 \cdot\left\|R_0^q-\frac{1}{p^p}\sum_{i=2}^p\binom{p}{i}p^{p-i} R_0^{i-1}(I-R_0)^{1-i}(I-R_0^q)^i\right\|_2 = c \|R_0\|_2
\]
with $c$ as defined in Eq.~\eqref{eq-1}. 
Using once more an induction, one obtains
\[
\| R_{k+1} \|_2 < c \| R_k \|_2 
\]
yielding linear convergence of $R_k$ to the zero matrix.
Hence, $B_k$ must converge to $A^{-1/p}$.

To determine the convergence order, we calculate for the iteration given by Eq.~\eqref{eq:def_pro}
the first and second derivative, espectively, given by
\begin{align*}
\varphi^{\prime}(X) =~&  q (I-X^{p}A)^{q-1}   
+ \frac{p-1}{p} \left(\left((I-X^{p}A)^q-I \right) (X^pA)^{-1} + I \right)\\
\varphi^{(2)}(X) = &-pq(q-1)X^{p-1}A(I-X^{p}A )^{q-2} + q(1-p)X^{-1} (I-X^{p}A )^{q-1} \\
&+ (p-1)X^{-p-1}A^{-1}(I-X^{p}A )^q- (p-1)X^{-p-1}A^{-1}.
\end{align*}
This yields $\varphi^{\prime}(A^{-1/p}) = 0$ for every $p$. For the second derivative it holds that
\begeq{\varphi^{(2)}(A^{-1/p}) = 0 \quad \Leftrightarrow\quad p=1.}

\noindent It is also possible to show that $\varphi^{(j)}(A^{-1/p})=0$, if and only if $p=1$ for $j = 3, \ldots, q-1$, since 
\begeq{ \varphi^{(j)}(X) = \sum_{i=0}^{j} J_{i,j}(I-X^{p}A)^{q-i} + (p-1)J_{j} X^{1-p-j} A^{-1}, }
where $J_{i,j}$ and $J_j$ are both \revision{non-zero rational} numbers.
This implies that according to Theorem \ref{gautschi}, the convergence of iteration defined by Eq.~\revision{(\ref{eq:def_pro})} is exactly quadratic for any $q$ if $p \neq 1$. For $p=1$, we have shown that the order of convergence is identical with $q$.
\end{proof}

Comparing this result with previous conditions for convergence the required Eq.~\eqref{eq-1} looks
rather complicated.
To illustrate that this condition is really necessary,
Fig.~\ref{fig:illu_norm} shows for the scalar case the development of
the residual after one step, i.e., $r_1$, when the initial residual $r_0$ varies in the interval $[0,1]$.
\begin{figure}
  \begin{center}
    \includegraphics[width=4.5cm]{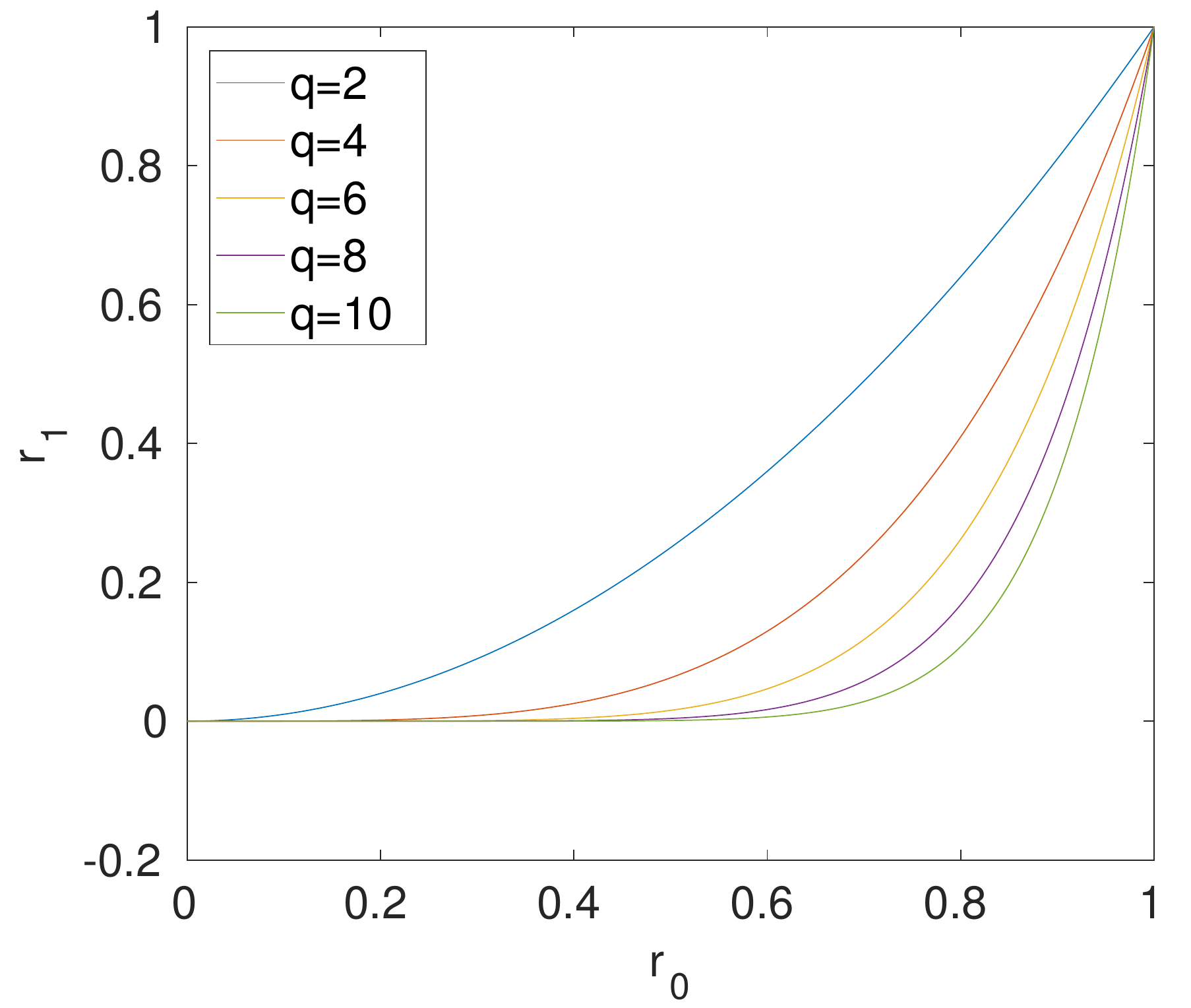} \quad
    \includegraphics[width=4.5cm]{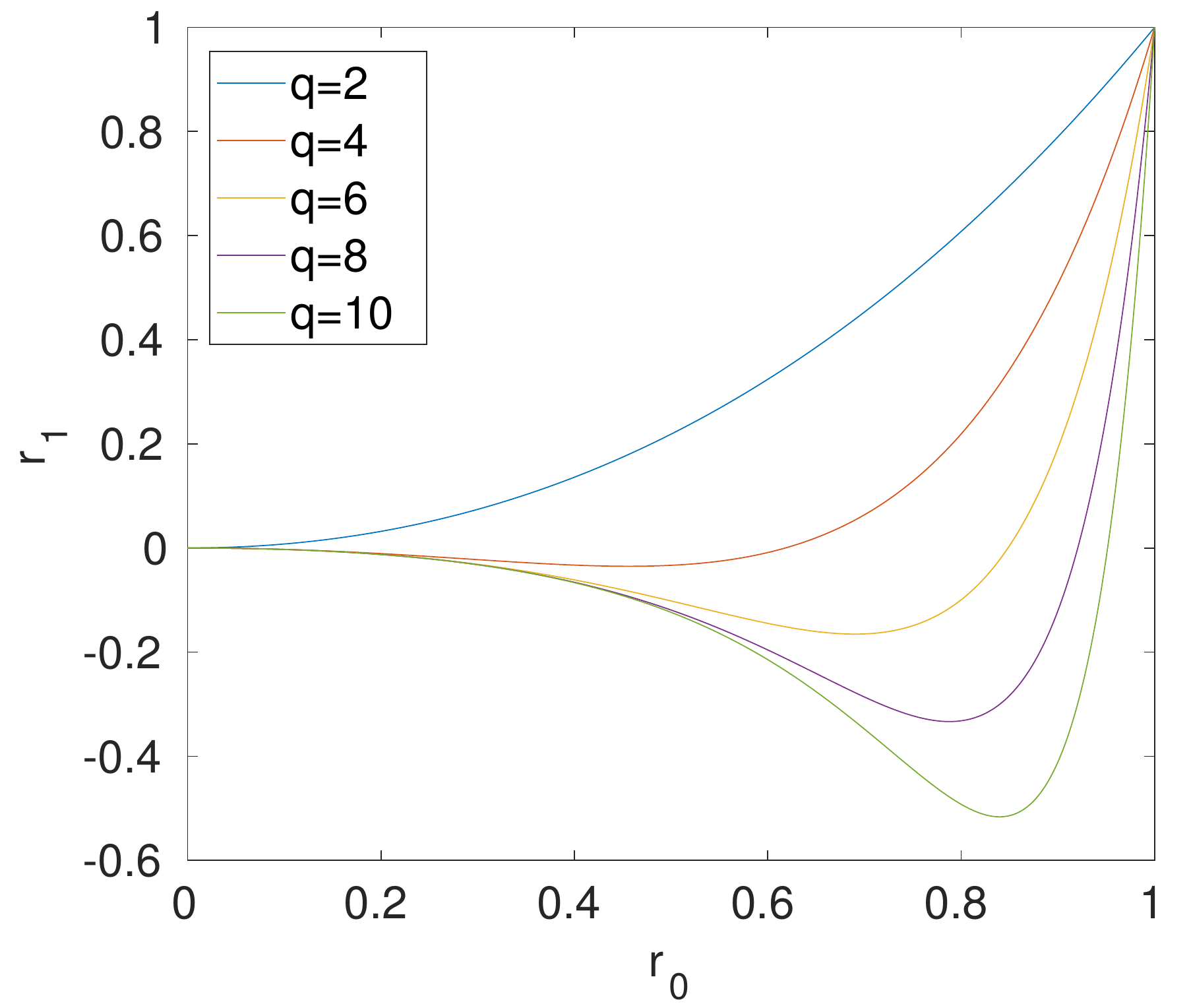} \quad
    \includegraphics[width=4.5cm]{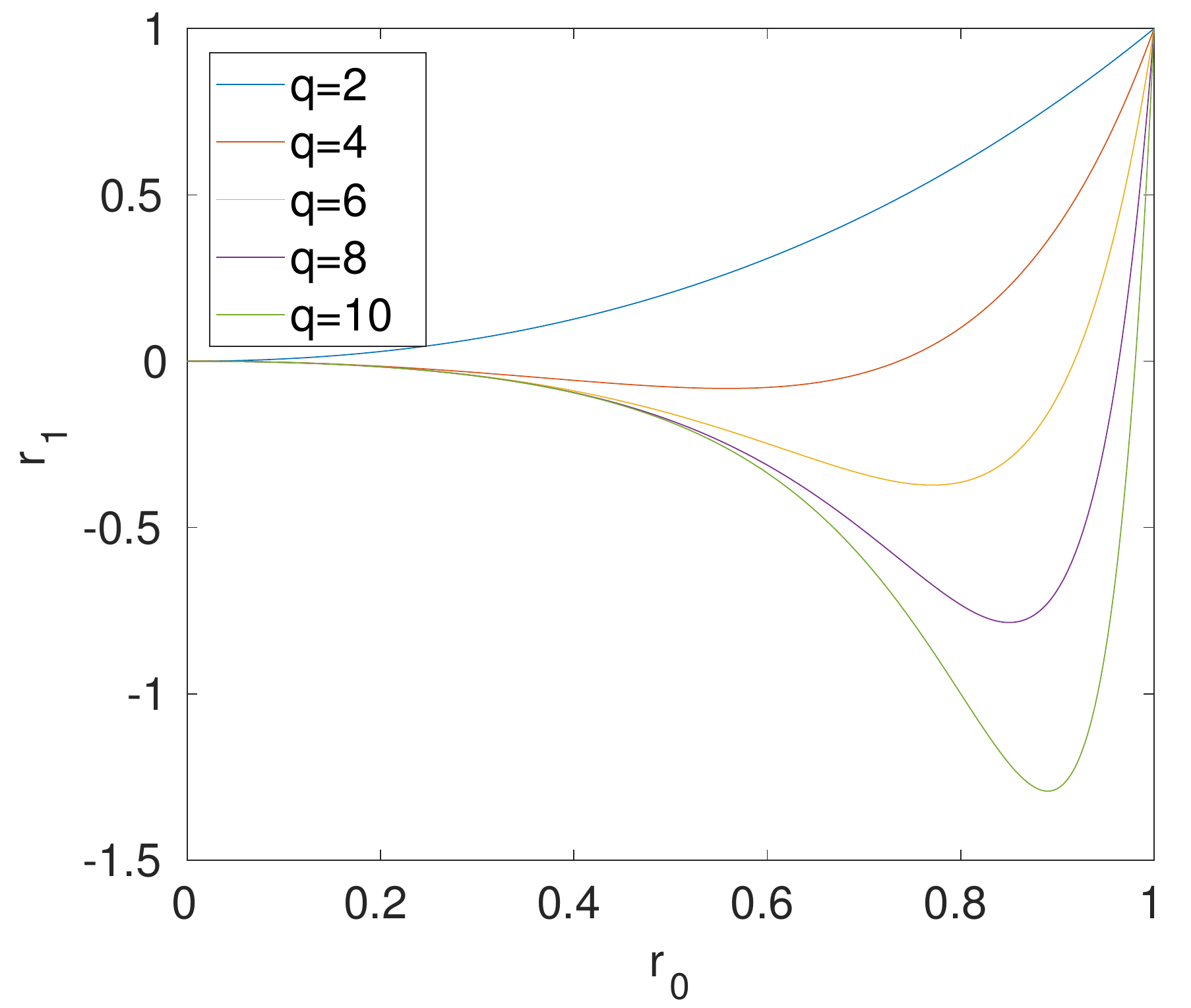}

   \quad $p=1$ \hspace*{4cm} $p=2$ \hspace*{4cm} $p=3$

    \includegraphics[width=4.5cm]{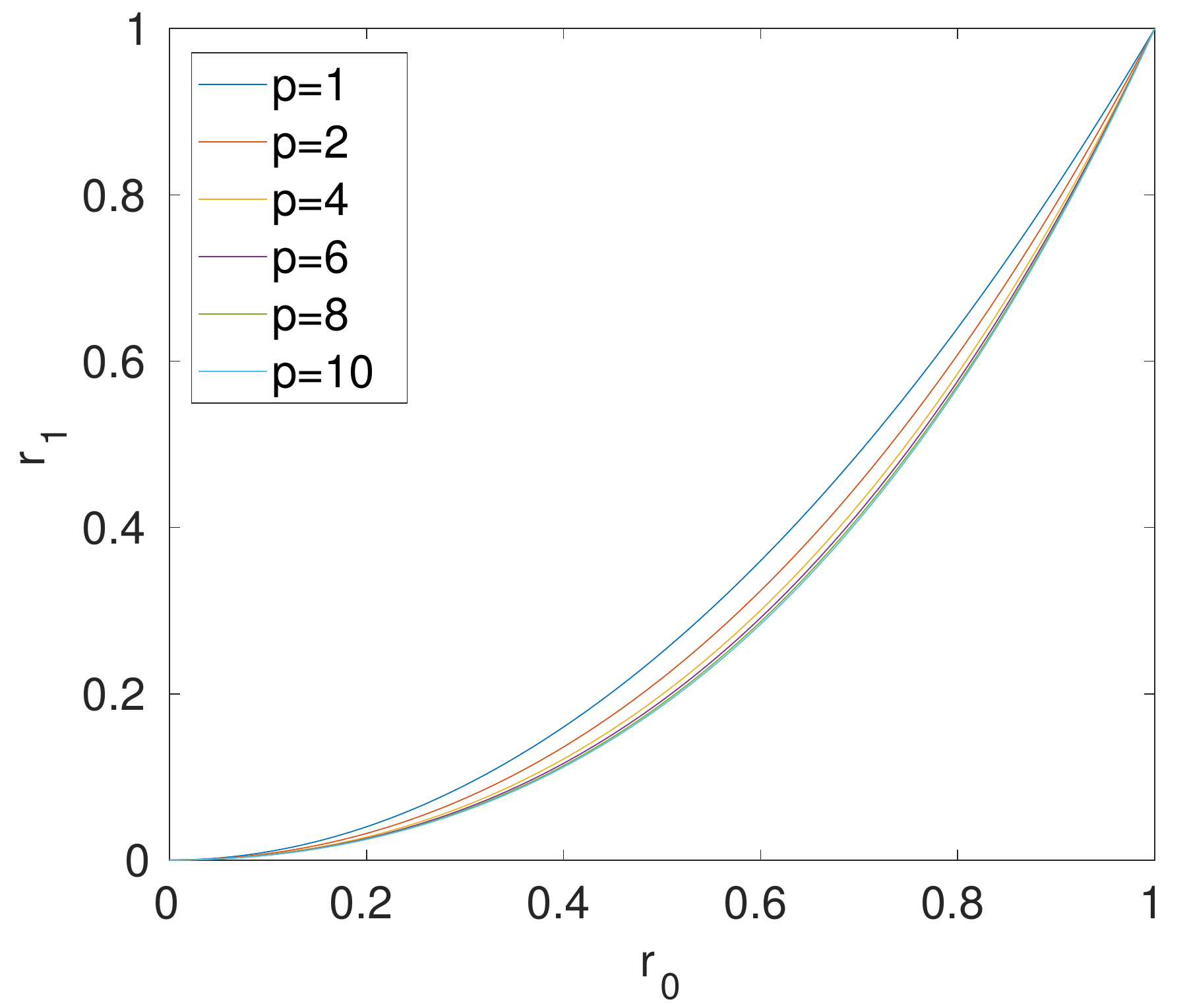} \quad
    \includegraphics[width=4.5cm]{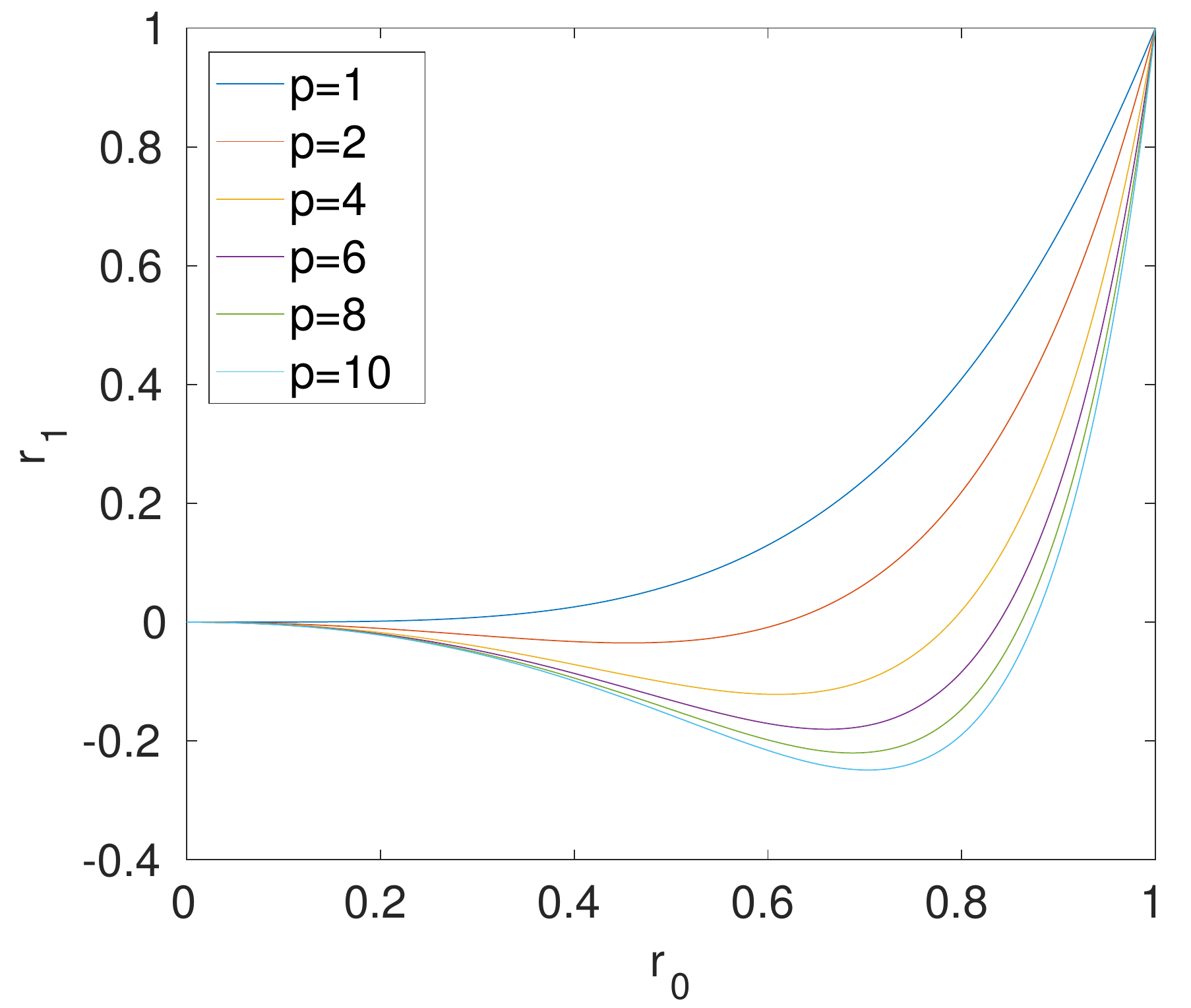} \quad
    \includegraphics[width=4.5cm]{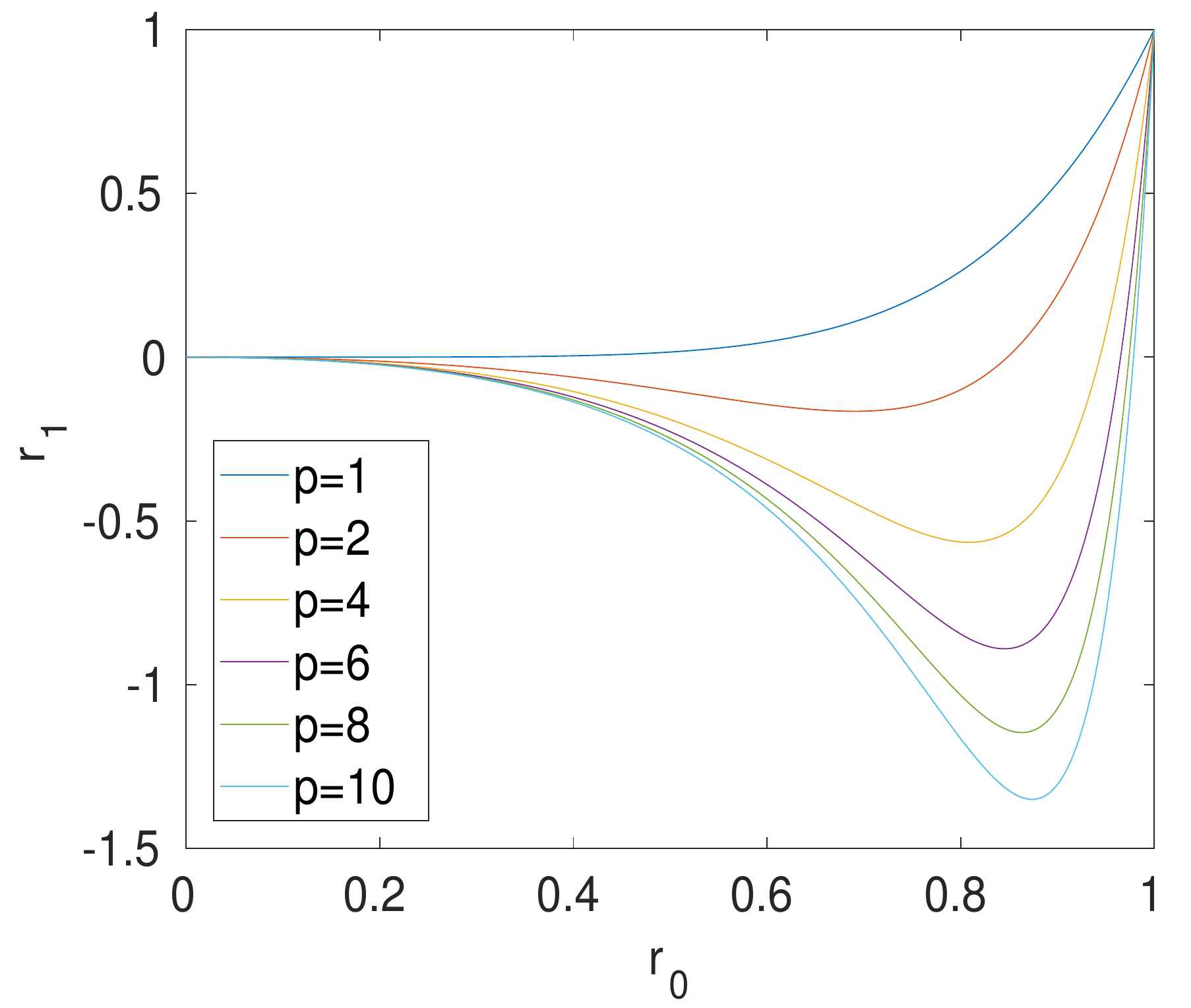}

   \quad $q=2$ \hspace*{4cm} $q=4$ \hspace*{4cm} $q=6$
  \end{center}
\caption{Norm of the second residual in dependence on the first residual}
\label{fig:illu_norm}
\end{figure}
It can be seen that the absolut value of the second residual can be larger than one even if the norm of the first residual is smaller than one if $p$ and $q$ are chosen in the wrong way. The rather complicated coniditon given in Eq.~\eqref{eq-1} ensures that the norm of the next residual decreases for all possible combinations of $p$ and $q\ge2$.  However, these illutrations suggest that for $p=1$ and arbitrary $q\ge2$ as well as for $q=2$ and arbitrary $p\ge1$ a simplification of the criterion given in Eq.~\eqref{eq-1} should be possible. For these cases one finds:
\begin{proposition}
\label{prop:simpler}
Let $A \in \mathbb{C}^{n \times n}$ be an invertible matrix.  Consider the iteration
defined by Eq.~\eqref{eq:def_pro} for $p, q \in \mathbb{N} \setminus \{0 \}$ and $q \ge 2$. 
If $p=1$ and $q\ge2$ or if $q=2$ and $p\ge1$, then the condition
\begin{equation*}
\left \|R_0^q-\frac{1}{p^p}\sum_{i=2}^p\binom{p}{i}p^{p-i}R_0^{i-1}\left(I-R_0\right)^{1-i}\left(I-R_0^q\right)^i\right\|_2=:c<1
\end{equation*}
for $R_0= I-AB^p_0$ can be simplified to
\begin{equation}
  \label{eq_eq-1a}
 \|R_0\|_2=: \tilde{c} <1
\end{equation}
\end{proposition}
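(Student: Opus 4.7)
The plan is to verify in each of the two special cases that the simpler hypothesis $\norm{R_0}_2<1$ already forces the quantity $c$ of Eq.~\eqref{eq-1} to be strictly less than one, after which the conclusion follows directly from Theorem~\ref{them}. Throughout, all matrices in sight lie in the commutative algebra generated by $B_0$ and $A$, so the telescoping and binomial manipulations used in the proof of Theorem~\ref{them} remain available.

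For $p=1$ the simplification is essentially cosmetic: the sum $\sum_{i=2}^{p}$ in Eq.~\eqref{eq-1} is empty, so the matrix whose $2$-norm defines $c$ collapses to $R_0^q$. Submultiplicativity then yields $c=\norm{R_0^q}_2\le\norm{R_0}_2^q<1$. The same fact is visible from Eq.~\eqref{eq:R11} via the geometric identity $(I-R_0)(I+R_0+\cdots+R_0^{q-1})=I-R_0^q$, which reduces $R_1$ to $R_0^q$ and reproduces Altman's hyperpower method of order~$q$.

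For $q=2$ the plan is to start from Eq.~\eqref{eq:R11} in the form $R_1=I-\tfrac{1}{p^p}(I-R_0)(pI+R_0)^p$ and expand by the binomial theorem. A short calculation gives a representation $R_1=\sum_{i=2}^{p+1}d_i R_0^i$ with coefficients $d_i\ge 0$ and $\sum_{i=2}^{p+1}d_i=1$. Factoring out $R_0$, one has $R_1=R_0\cdot M$ with $M=\sum_{i=2}^{p+1}d_i R_0^{i-1}$; using the elementary identity $(I-R_0)^{1-i}(I-R_0^2)^i=(I-R_0)(I+R_0)^i$ in Eq.~\eqref{eq-1}, this $M$ is precisely the bracketed matrix whose norm equals $c$. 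Termwise estimates then give $\norm{M}_2\le\sum_{i=2}^{p+1}d_i\norm{R_0}_2^{i-1}\le\norm{R_0}_2\sum_{i=2}^{p+1}d_i=\norm{R_0}_2<1$, so $c\le\norm{R_0}_2<1$.

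The only point that needs genuine work is the non-negativity of the $d_i$ (and the normalization $\sum d_i=1$) in the $q=2$ case. I would establish these either by computing the closed form $d_i=\tfrac{p+1}{i\,p^{i-1}}\binom{p-1}{i-2}$, obtained by termwise integration of the scalar derivative $f'(r)=r(p+1)(p+r)^{p-1}/p^p$, or by observing that for $q=2$ the iteration~\eqref{eq:def_pro} collapses to the Bini et al.\ iteration $B_{k+1}=\tfrac{1}{p}[(p+1)B_k-B_k^{p+1}A]$, whose convergence under $\norm{R_0}<1$ is already recorded in the discussion preceding Theorem~\ref{them}. The identity $\sum d_i=1$ is then just the scalar evaluation $f(1)=1$, confirming the bound.
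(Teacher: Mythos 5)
Your proposal is correct and follows essentially the same route as the paper: for $p=1$ the sum in Eq.~\eqref{eq-1} is empty and submultiplicativity gives $c=\norm{R_0^q}_2\le\norm{R_0}_2^q<1$, while for $q=2$ both arguments rest on the representation $R_1=\sum_{i=2}^{p+1}a_iR_0^i$ with positive coefficients summing to one and the factorization $R_1=R_0\bigl(\sum_{i=2}^{p+1}a_iR_0^{i-1}\bigr)$ from Eq.~\eqref{eq:R1}. The only difference is that the paper cites Bini et al.\ (Prop.~6.1) for the nonnegativity and normalization of the coefficients, whereas you derive them explicitly ($d_i=\tfrac{p+1}{i\,p^{i-1}}\binom{p-1}{i-2}$, with $\sum d_i=f(1)=1$), which is a correct, self-contained substitute for that citation.
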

\begin{proof}
For $p=1$ and $q\ge 2$, Eq.~\eqref{eq-1} reduces to
\begin{align*}
  \left\|R_0^q-\frac{1}{p^p}\sum_{i=2}^p\binom{p}{i}p^{p-i}R_0^{i-1}\left(I-R_0\right)^{1-i}\left(I-R_0^q\right)^i\right\|_2=\|R_0^q\|_2<1
  \quad\Leftrightarrow \quad\|R_0\|_2=: \tilde{c}<1.
\end{align*}
For $q=2$ and $p\ge1$,  one obtains from Eq.~\eqref{eq:R11} for $R_1$ the equation
\begin{align*}
  R_{1}& = I - \frac{1}{p^p} (I-R_0)(pI+R_0)^p,
\end{align*}
 when taking $q=2$ into account.
 Now, we will use another but equivalent reformulation for $R_1$ to show the assertion. Using the same reformulation as in \cite[Prop 6.1]{bini}, one has for $q=2$ and $p\ge1$ that
\begin{align*}
  R_{1}& = \sum_{i=2}^{p+1}a_iR_0^i\qquad\mbox{with}\qquad  a_i>0,\quad 2\le i\le p+1,\quad \sum_{i=2}^{p+1}a_i=1.
\end{align*}
Combining the first equality with the representation of $R_1$ given in Eq.~\eqref{eq:R1}, one obtains
\begin{align*}
  R_{1}& =R_0\left(R_0^q-\frac{1}{p^p}\sum_{i=2}^p\binom{p}{i}p^{p-i} R_0^{i-1}(I-R_0)^{1-i}(I-R_0^q)^i\right)  =
  R_0\left(\sum_{i=2}^{p+1}a_iR_0^{i-1}\right)
\end{align*}
yielding
\begin{align*}
  \|R_1\|_2 \le \|R_0\|_2\,\left\|\sum_{i=2}^{p+1}a_iR_0^{i-1}\right\|_2 \le \|R_0\|_2\left(\sum_{i=2}^{p+1}a_i\|R_0\|_2^{i-1}\right).
\end{align*}
If $\|R_0\|_2= \tilde{c} <1$ holds, it follows that
\begin{align*}
  \sum_{i=2}^{p+1}a_i\|R_0\|_2^{i-1} :=\hat{c} < \sum_{i=2}^{p+1}a_i = 1.
\end{align*}
Therefore, the inequality
\begin{align*}
   \|R_1\|_2 \le \hat{c} \|R_0\|_2,
\end{align*}
is valid, which yields convergence of $B_k$ as shown in the proof of Theo.~\ref{them}.
\end{proof}

\begin{table}
\begin{center}
  \begin{tabularx}{0.7\textwidth}{X|C|C|C|C|C|C|C|C|C}
    $p$ &  2 & 3 & 4 & 5 & 6 & 7 & 8 & 9 & 10\\ \hline
    $q$ & 15 & 8 & 7 & 6 & 6 & 5 & 5 & 5 & 5  
  \end{tabularx}
  \bigskip

  \noindent
  \begin{tabularx}{0.7\textwidth}{X|C|C|C|C|C|C|C|C}
    $q$ &  3 & 4 & 5 & 6 & 7 & 8 & 9 & 10\\ \hline
    $p$ &  $>20$ & $>20$ & $>20$ & 6 & 4 & 3 & 2 & 2   
  \end{tabularx}
\end{center}
  \caption{Highest value of second parameter (second line) for given first paramter (first line) for $n=1$.}
  \label{tab:overview}
\end{table}

To get an impression of the remaining cases, Tab.~\ref{tab:overview} lists the maximal possible values of the $q$ and $p$, respectively, when fixing $p$ and $q$, respectively, for the case $n=1$. These values were obtained by a simple numerical test varying $r_0$ in the whole interval $[0,1]$. For higher values of $n$, the numbers given in Tab.~\ref{tab:overview} represent an upper bound on the parameter values for which the simpler condition $\|R_0\|_2\le 1$ suffices.

With respect to the convergence order, as a trivial consequence we conclude that a larger value for $q$ does in general not lead to a higher order of convergence. However, in the following we perform numerical calculations using MATLAB \cite{matlab}, to determine the number of iterations, matrix-matrix multiplications, as well as the total computational time necessary to obtain the inverse $p$-th root of a given matrix as a function of $q$ within a predefined target accuracy. We find that a higher order of expansion than two within Eq.~\revision{(\ref{eq:phi1})} leads in almost all cases to faster convergence. Details are presented in Section~\ref{sec:numResults}.

We now show that our scheme includes as special cases the method of Bini et al.~\cite{bini} and the hyperpower method of Altman~\cite{altm}.
From now on, we deal with Eq.~\revision{(\ref{eq:def_pro})} for the definition of the matrices 
$B_k \in \mathbb{C}^{n \times n}$, i.e., define $B_{k+1} = \varphi(B_k)$. 
We assume that the start matrix $B_0$ satisfies 
$B_0A=AB_0$ and Eq.~(\ref{eq-1}). For $p=1$ and $q=2$, we get the already mentioned Newton-Schulz iteration that converges quadratically to the inverse of $A$
\begin{align*}
B_{k+1} &= - \left((I-B_kA)^2 -I \right) A^{-1} = (-(B_kA)^2 + 2B_kA)A^{-1}  \\
&= 2B_k - B_k^2A.
\end{align*}
For $q=2$ and any $p$, we get the iteration
\begin{align*}
B_{k+1} &= \frac{1}{p} \left[ (p-1) B_k - \left((I-B_k^{p}A)^2 -I\right) B_k^{1-p}A^{-1} \right] \nonumber \\
&= \frac{1}{p} \left[(p-1) B_k - \left( (B_k^{p}A)^2 - 2B_k^{p}A\right) B_k^{1-p}A^{-1} \right] \nonumber \\
&= \frac{1}{p} \left[(p-1) B_k - \left( B_k^{p+1}A - 2B_k\right) \right] \nonumber \\
& = \frac{1}{p} \left[(p+1) B_k - B_k^{p+1}A\right].
\end{align*}
This is exactly the matrix iteration in Eq.~\revision{(\ref{eqn:q2p-1})} that has been discussed in the work of Bini et al.~\cite{bini}. In both cases, Prop.~\ref{prop:simpler} shows that our condition Eq.~\eqref{eq-1} on $R_0$ is equivalent to the condition $\|R_0\|<1$ used for the convergence analysis of these methods in the corresponding original papers \cite{nr,bini}.

We now proceed by dealing with the iteration formula in the case $p=1$ and show that it converges faster for higher orders $(q>2)$. For that purpose, we take Eq.~\revision{(\ref{eq:def_pro})} and calculate for $p=1$
\begin{align*} 
B_{k+1} &= \frac{1}{p} \left[ (p-1) B_k - \left((I-B_k^{p}A)^q - I\right) B_k^{1-p}A^{-1} \right] \nonumber \\
&\overset{p=1}{=} \left[I - (I-B_kA)^q\right]A^{-1}.
\end{align*}
We now prove that this is convergent of at least order $q$ in the sense of Definition \ref{defi3}.
\begin{align*}
\norm{B_{k+1} -A^{-1}}_{\revision{2}} &= \norm{(I-(I-B_kA)^q)A^{-1} - A^{-1} }_{\revision{2}} \nonumber \\
&= \norm{(I-B_kA)^qA^{-1}}_{\revision{2}} \nonumber \\
&= \norm{(I-B_kA)^qA^{-1}A^{-q}A^q}_{\revision{2}} \nonumber \\
&\leq \norm{A}_{\revision{2}}^{q-1}\cdot\norm{(I-B_kA)^q(A^{-1})^q}_{\revision{2}} \nonumber \\
&\leq \norm{A}_{\revision{2}}^{q-1}\cdot\norm{A^{-1}-B_k}_{\revision{2}}^q.
\end{align*}
Next, we show why the iteration function in Eq.~\revision{(\ref{eq:def_pro})} coincides for $p=1$ with Altman's work on the hyperpower method \cite{altm}. We have already shown oin the proof of Theo.~\ref{them} that
\begin{align}
B_{k+1} &= \frac{1}{p}B_k\left[(p-1) + (R_k^{q-1}+R_k^{q-2}+\ldots+R_k+I)\right]=\frac{1}{p}B_k\left[pI + \left(\sum_{j=1}^{q-1}R_k^j\right) \right]. \label{eqn:Xk1}
\end{align}
Altman, however, proved convergence of any order for the iteration scheme in Eq.~\revision{(\ref{eq:alt})}, i.e.\revision{,}
\[ B_{k+1} = B_k(I + R_k + R_k^2 + \ldots + R_k^{q-1}), \quad B_0 \in V \]
when calculating the inverse of a given linear, bounded and \revision{invertible} operator $A \in V$.
If we take $\mathbb{R}^{n \times n}$ for the Banach space $V$, this is identical to Eq.~\revision{(\ref{eqn:Xk1})} with $p=1$. Once more,  Prop.~\ref{prop:simpler} shows that our condition Eq.~\eqref{eq-1} is equivalent to the one used by Altman in \cite{altm}.

\section{\label{sec:numResults}Numerical Results}
Even if the mathematical analysis of our iteration function results in the awareness that, except for $p=1$, larger $q$ \revision{do} not lead to a higher order of convergence, we conduct numerical tests by varying $p$ and $q$. Concerning the matrix $A$, whose inverse $p$-th root should be determined, we take real symmetric positive definite random matrices with different densities and condition numbers. We do this using MATLAB and quantify the number of iterations (\#it) \revision{and} matrix-matrix multiplications (\#mult) until the calculated inverse $p$-th root is close enough to the true inverse $p$-th root.

\subsection{The Scalar Case}
First, we assess our program for the scalar case. As the computation of roots of matrices is strongly connected with their eigenvalues, it is logical to study the formula for scalar quantities $\lambda$ first, which corresponds to $n=1$ in Eq.~\revision{(\ref{eq:phi1})}. We take values $\lambda$ varying from $10^{-9}$ to $1.9$ and choose $b_0 = 1$ as the start value. For that choice of $b_0$, we have guaranteed convergence for $\lambda \in (0, 2)$ \AW{for $q\le 10$ as illustrated in Fig.~\ref{fig:illu_norm}}. We calculate the inverse $p$-th root of $\lambda$, i.e.\revision{,} $\lambda^{-1/p}$, where the convergence threshold $\varepsilon$ is set to $10^{-8}$. In most cases, already a larger value for $\varepsilon$ yields sufficiently accurate results, but to see differences in the computational time, we choose an artificial small threshold.
\revision{While running the iteration scheme, we count the number of iterations and the total number of multiplications until convergence.}
\revision{In contrast to the matrix case, we do not use the norm of the residual $r_k = 1 - b_k^p \cdot \lambda$ as a criterion for the convergence threshold, but the \revision{error} between the $k$-th iterate and the correct inverse $p$-th root, thus $\tilde r_k = b_k - \lambda^{-1/p}$.}
This is due to the fact that in the scalar case, one can easily get $\lambda^{-1/p}$ by a straightforward calculation. Note that we have not necessarily $|\tilde r_k| < 1$, but only $|r_k| < 1$. \revision{In order} to better distinguish the scalar from the matrix case, we write in the scalar case $b_k, \lambda$, and $r_k$ instead of $B_k, A$, and $R_k$.

We choose a rather wide range for $q$ to see the influence of this value on the number of iterations and \revision{multiplications}. In agreement with our observations in the matrix case, which we describe in the next subsection, there is a correlation between the choice of $q$, the number of multiplications \revision{and the number of iterations}. In the following, we elaborate \revision{on the optimal choice of $q$}.

\begin{table}[bt]
\centering
 \begin{minipage}[t]{0.475\textwidth}
  \centering
   \caption{Numerical results for $p=2$, $\lambda = 1.5$. Optimal values are shown in bold.\vspace{5mm}}
   \label{lamda15}
 \begin{tabular}{rrr}
\hline\hline
 $q$ & \#it & \#mult\Tstrut\Bstrut\\
 \hline
2 & 5 & 37\Tstrut\\
3 & 4 & 34 \\
\textbf{4} & \textbf{3} & \textbf{29} \\
5 & 4 & 42 \\
6 & \textbf{3} & 35 \\
7 & 4 & 50 \\
8 & 4 & 54\Bstrut\\
\hline\hline
 \end{tabular}
\end{minipage}\begin{minipage}[t]{0.05\textwidth}\hfill\end{minipage}\begin{minipage}[t]{0.475\textwidth}
   \centering
   \caption{Numerical results for $p=2$, $\lambda = 10^{-9}$. Optimal values are shown in bold.\vspace{5mm}}
   \label{lowev}
 \begin{tabular}{rrr}
\hline\hline
 $q$ & \#it & \#mult\Tstrut\Bstrut\\
 \hline
2 & 27 & 191\Tstrut\\
3 & 17 & 138 \\
4 & 14 & 128 \\
5 & 12 & \textbf{122} \\
6 & 11 & 123 \\
\textbf{7} & \textbf{10} & \textbf{122} \\
8 & \textbf{10} & 132\Bstrut\\
\hline\hline
 \end{tabular}
\end{minipage}
\end{table}
Typically, the number \revision{of necessary multiplications} is minimal for the smallest $q$, which entails the lowest number of required iterations.
However, there are cases where the number of iterations is not steadily decreasing for higher $q$, as for example when computing the inverse square root of $\lambda = 1.5$ (Table \ref{lamda15}). Nevertheless, we conclude that the best choice is $q=4$, as we have the lowest number of \revision{both} iterations and multiplications.
In other cases, however, the trend is much simpler. For example, when computing the inverse square root of $\lambda = 10^{-9}$, while varying $q$ from 2 to 8, we clearly obtain the most efficient result for $q = 7$, where the number of iterations \revision{and the number of multiplications are} minimal (Table \ref{lowev}).

In most of the cases, it is \textit{a priori} not apparent which value for $q$ is the best for a certain tuple $(p, \lambda)$. It is possible that the lowest number of iterations is attained for really large $q$, i.e.\revision{,} $q > 20$. To present a general rule of thumb, we pick $q$ from $3$ to $8$ as this usually gives a good and fast approximation of the inverse $p$-th root of a given $\lambda \in (0,2)$. Hereby, we observe that for values close to $1$, the optimal choice is in most, but not all cases, $q=3$ and for values close to the borders of the interval, mostly $q=6$ is the best choice. This shows that the further the value of $\lambda$ is away from our start value $b_0 = 1$, the more important it is to choose a larger $q$.

\begin{figure}[bt]
\centering
\includegraphics[width=.7\textwidth]{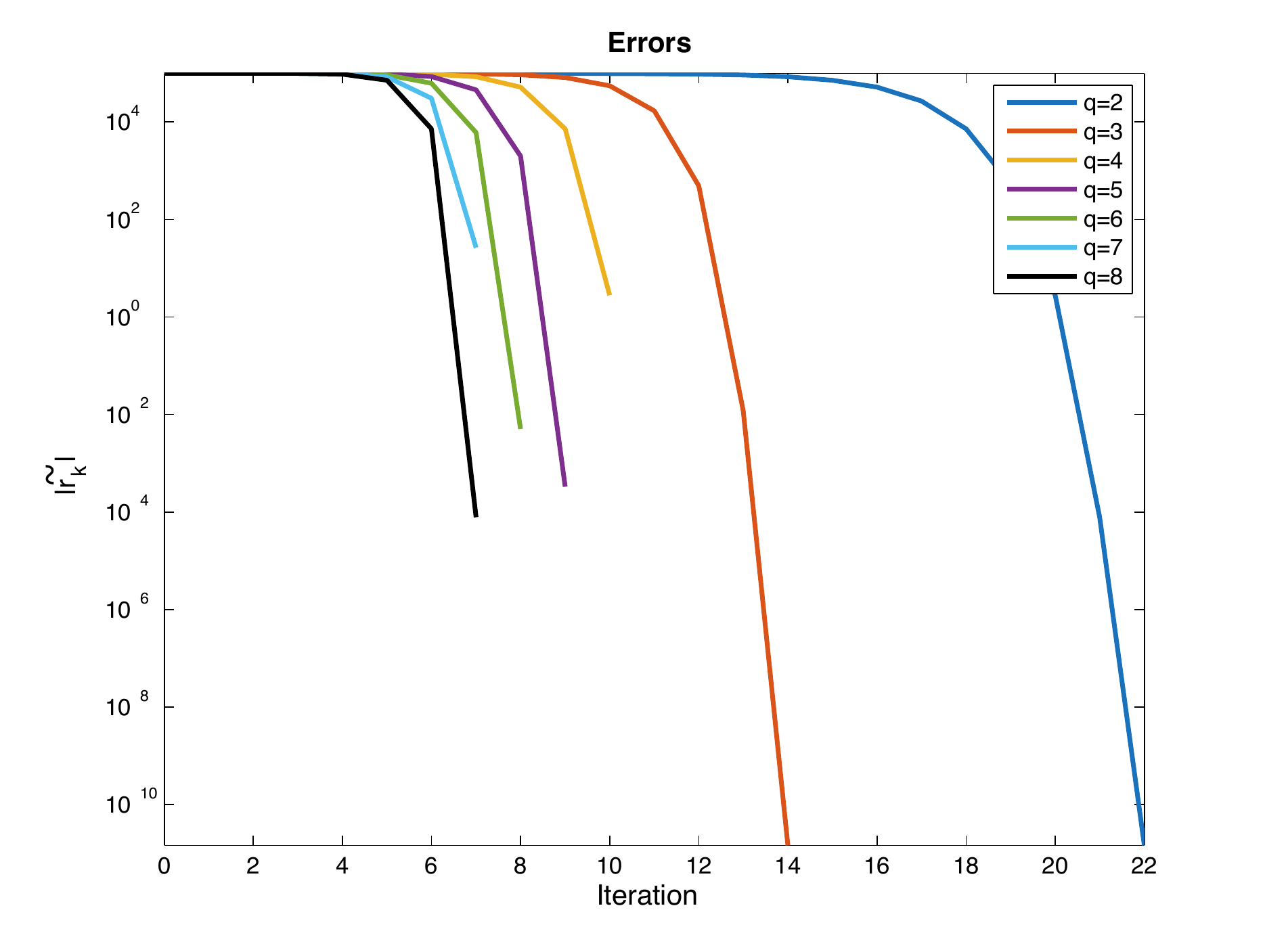}
\caption[Errors for $p=1$ and $\lambda = 10^{-6}$]{Errors for $p=1$ and $\lambda = 10^{-6}$. The optimal choice is $q=5$, even though $q=7$ and $q=8$ require \secrevision{fewer} iterations, but overall more multiplications.}
\label{fig:p1a1}
\end{figure}

As \revision{shown} in Fig.~\ref{fig:p1a1} for $p=1$, $\lambda = 10^{-6}$, a larger $q$ causes the iteration to \revision{enter convergence} after \secrevision{fewer} iterations. This is due to the fact that in every iteration 
\begin{align}
b_{k+1} =& \, \frac{1}{p} \left[ (p-1) - \left( (1-b_k^p\lambda)^q-1 \right)/(b_k^p\lambda) \right]  b_k \nonumber \\
=& \, b_k + \frac{1}{p} \left( \sum_{i=1}^{q-1} r_k^i \right) b_k \label{eq:sumrk}
\end{align}
it is calculated. \revision{Hence,} a larger $q$ leads to more summands in Eq.~\revision{(\ref{eq:sumrk})} and therefore to larger steps and variation of $b_{k+1}$. This also holds for negative $r_k$, as we have $r_k \in (-1,1)$ and therefore $|r_k^{i+1}| < |r_k^i|$.
\revision{This argument is also true for $p\neq1$ which explains the faster convergence for larger $q$ despite that the order of convergence is limited to $2$ in this case.}
However, a larger value for $q$ obviously increases the performance just up to a certain limit. This is due to the fact that a larger value of $q$ implies that $b^p_k$ is raised by larger exponents. Therefore, the number of multiplications increases, which is, especially in the matrix case, \revision{computationally the most time consuming part}. This is why we not only take into account the number of iterations but also the number of multiplications.

\subsection{The Matrix Case\label{sec:level4b}}
For evaluating the performance of our formula for matrices, we selected various set-ups with different variables $p$, $q$, densities $d$ and condition numbers \revision{$\kappa$}. The density of a matrix is defined as the number of its non-zero elements divided by its total number of elements. The condition number of a normal matrix, for which $AA^* = A^*A$ holds, is defined as the quotient of its eigenvalue with largest absolute value and its eigenvalue with smallest absolute value. Well-conditioned matrices \revision{have condition numbers close to $1$}. The bigger the condition number is, the more
ill-conditioned $A$ is. For each set-up, we take ten symmetric positive definite matrices $A \in \mathbb{R}^{1000 \times 1000}$ with random entries, generated by the MATLAB function \texttt{sprandsym} \cite{matlab}. This yields matrices with a spectral radius $\rho(A)<1$. \revision{We store the number of required iterations and the number of matrix-matrix multiplications for each random matrix}. Then, we average these values over all considered random matrices. \revision{We choose $q$ between $2$ and $6$ and stop the algorithm as soon as the norm of the residual is below $\varepsilon = 10^{-4}$.}

By using the sum representation of Eq.~\revision{(\ref{eq:def_pro})}, i.e.\revision{,}
\begin{align*} 
  B_{k+1} = \frac{1}{p} \left[ (p-1) I - \sum_{i=1}^q \binom{q}{i}(-1)^i(B_k^{p}A)^{i-1} \right]B_k, 
\end{align*} 
where the number of multiplications is minimized by saving previous powers of $B_k^pA$. It is evident that the number of matrix-matrix multiplications for a particular \revision{number of iterations} is minimal for the smallest value of $q$. However, a larger $q$ can also \revision{mean \secrevision{fewer} iterations. Consequently, the best value for $q$ usually is} the smallest that yields the lowest possible number of iterations for a certain set-up. In general, the number of matrix-matrix multiplications $m$ can be determined as a function of $p$, $q$, as well as the number of necessary iterations $j$ and which reads as 
\begeq{m(p,q,j) = p+\left((q-1)+p\right)j.}

To fulfill the conditions of Theorem \ref{them}, we claim that the start matrix $B_0$ commutes with the given matrix $A$. If $B_0$ is chosen as a positive multiple of the identity matrix or the matrix $A$ itself, i.e.\revision{,} $B_0 = \alpha I$ or $B_0 = \alpha A$ for $\alpha > 0$, respectively, then it is obvious that $AB_0 = B_0A$ holds and that $AB_0$ is symmetric positive definite.
\secrevision{For the range of $p$ and $q$ used in this evaluation, the simplified condition $\|R_0\|_2\le 1$ suffices to guarantee convergence as shown in Tab.~\ref{tab:overview}}.
Also, in the first part of the calculations, we only deal with matrices $A$ that have a spectral radius smaller than $1$. If we take $B_0 = I$, then we have $\norm{I-B_0A}_2<1$ due to the following
\begin{lemma}
\label{lemme}
Let $C \in \mathbb{C}^{n \times n}$ be a Hermitian positive definite matrix with $\norm{C}_2 \leq 1$. Then, \revision{it holds that $\norm{I-C}_2<1$.}
\end{lemma}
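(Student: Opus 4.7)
The plan is to reduce the operator-norm statement to a statement about eigenvalues, exploiting that $C$ is Hermitian. Specifically, for any Hermitian matrix $M$ one has $\norm{M}_2 = \rho(M) = \max_i |\lambda_i(M)|$, so the task becomes: show that every eigenvalue of $I - C$ has absolute value strictly less than $1$.

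First I would observe that $C$ being Hermitian positive definite gives real, strictly positive eigenvalues $\lambda_1,\ldots,\lambda_n > 0$, and the hypothesis $\norm{C}_2 \le 1$ combined with $\norm{C}_2 = \max_i \lambda_i$ forces $\lambda_i \in (0,1]$ for all $i$. Then $I - C$ is itself Hermitian, with eigenvalues exactly $1 - \lambda_i$, each of which lies in $[0,1)$ — the upper bound is strict because $\lambda_i > 0$.

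Therefore $\norm{I-C}_2 = \max_i |1 - \lambda_i| < 1$, which is the claim. The only subtle point — and it is barely an obstacle — is to be explicit that positive definiteness (not merely positive semidefiniteness) is what guarantees the strict inequality: without $\lambda_{\min}(C) > 0$ one could only conclude $\norm{I-C}_2 \le 1$. No further machinery is needed beyond the spectral theorem for Hermitian matrices.
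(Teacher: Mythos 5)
Your proposal is correct and follows essentially the same route as the paper's proof: both diagonalize the Hermitian matrix $C$ via the spectral theorem, note that its eigenvalues $\mu_i$ lie in $(0,1]$ by positive definiteness and $\norm{C}_2 \leq 1$, and conclude $\norm{I-C}_2 = 1-\mu_{\min} < 1$. Your remark that strict positive definiteness is what yields the strict inequality is exactly the point the paper's argument relies on as well.
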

\begin{proof}
It is apparent that $\norm{I}_2 = 1$. Let $U$ be the unitary matrix such that $U^{-1}CU = \diag(\mu_1, \ldots, \mu_n)$, where $\mu_i \in (0,1] $ are the eigenvalues of $C$. Then we have
\begin{align*}
\norm{I-C}_2 = \norm{U^{-1}(I-C)U}_2 
= \norm{I-\diag(\mu_1, \ldots, \mu_n)}_2 
= 1-\mu_{\min} < 1,
\end{align*}
where $\mu_{\min}$ is the smallest eigenvalue of $C$.
\end{proof}

\begin{table}[bt]
\centering
  \begin{minipage}[t]{0.475\textwidth}
    \centering
 \caption{\revision{Numerical results for $\kappa=500$, $p=1$ and $d\in\{0.003, 0.1\}$. Optimal values are shown in bold.\vspace{5mm}}}
 \label{c500_1}
 \begin{tabular}{rrr}
\hline\hline
 $q$ & \#it & \#mult\Tstrut\Bstrut\\
 \hline
2 & 13 & 27\Tstrut\\
\textbf{3} & 8 & \textbf{25} \\
4 & 7 & 29 \\
5 & 6 & 31 \\
6 & \textbf{5} & 31\Bstrut\\
\hline\hline
\end{tabular}
\end{minipage}\begin{minipage}[t]{0.05\textwidth}\hfill\end{minipage}\begin{minipage}[t]{0.475\textwidth}
\centering
\caption{\revision{Numerical results for $\kappa=500$, $p=4$ and $d\in\{0.003, 0.1\}$. Optimal values are shown in bold.\vspace{5mm}}}
\label{c500_4}
\begin{tabular}{rrr}
\hline\hline
$q$ & \#it & \#mult\Tstrut\Bstrut\\
\hline
2 & 10 & 54\Tstrut\\
3 & 6 & 40 \\
\textbf{4} & \textbf{5} & \textbf{39} \\
5 & \textbf{5} & 44 \\
6 & \textbf{5} & 49\Bstrut\\
\hline\hline
\end{tabular}
\end{minipage}
\end{table}
In \revision{the following,} we want to investigate \revision{the relation between the number of iterations and $q$ for different $p$, densities $d$ and condition numbers $\kappa$}.
Specifically, we begin with sparse matrices $A$, where $\revision{\kappa}=500$.
\revision{In this case, the results are identical for densities $d=0.003$ and $d=0.1$.}

\begin{table}[bt]
\centering
  \begin{minipage}[t]{0.475\textwidth}
    \centering
 \caption{\revision{Numerical results for $\kappa=10$, $p=1$ and $d=0.003$. Optimal values are shown in bold.\vspace{5mm}}}
 \label{c10_1}
 \begin{tabular}{rrr}
\hline\hline
 $q$ & \#it & \#mult\Tstrut\Bstrut\\
 \hline
\textbf{2} & 7 & \textbf{15}\Tstrut\\
3 & 5 & 16 \\
4 & 4 & 17 \\
5 & \textbf{3} & 16 \\
6 & \textbf{3} & 19\Bstrut\\
\hline\hline
\end{tabular}
\end{minipage}\begin{minipage}[t]{0.05\textwidth}\hfill\end{minipage}\begin{minipage}[t]{0.475\textwidth}
\centering
\caption{\revision{Numerical results for $\kappa=10$, $p=4$ and $d=0.003$. Optimal values are shown in bold.\vspace{5mm}}}
\label{c10_4}
\begin{tabular}{rrr}
\hline\hline
$q$ & \#it & \#mult\Tstrut\Bstrut\\
\hline
2 & 6 & 34\Tstrut\\
\textbf{3} & \textbf{4} & \textbf{28} \\
4 & \textbf{4} & 32 \\
5 & \textbf{4} & 36 \\
6 & \textbf{4} & 40\Bstrut\\
\hline\hline
\end{tabular}
\end{minipage}
\end{table}

\begin{figure}[bt]
\centering
\includegraphics[width=.7\textwidth]{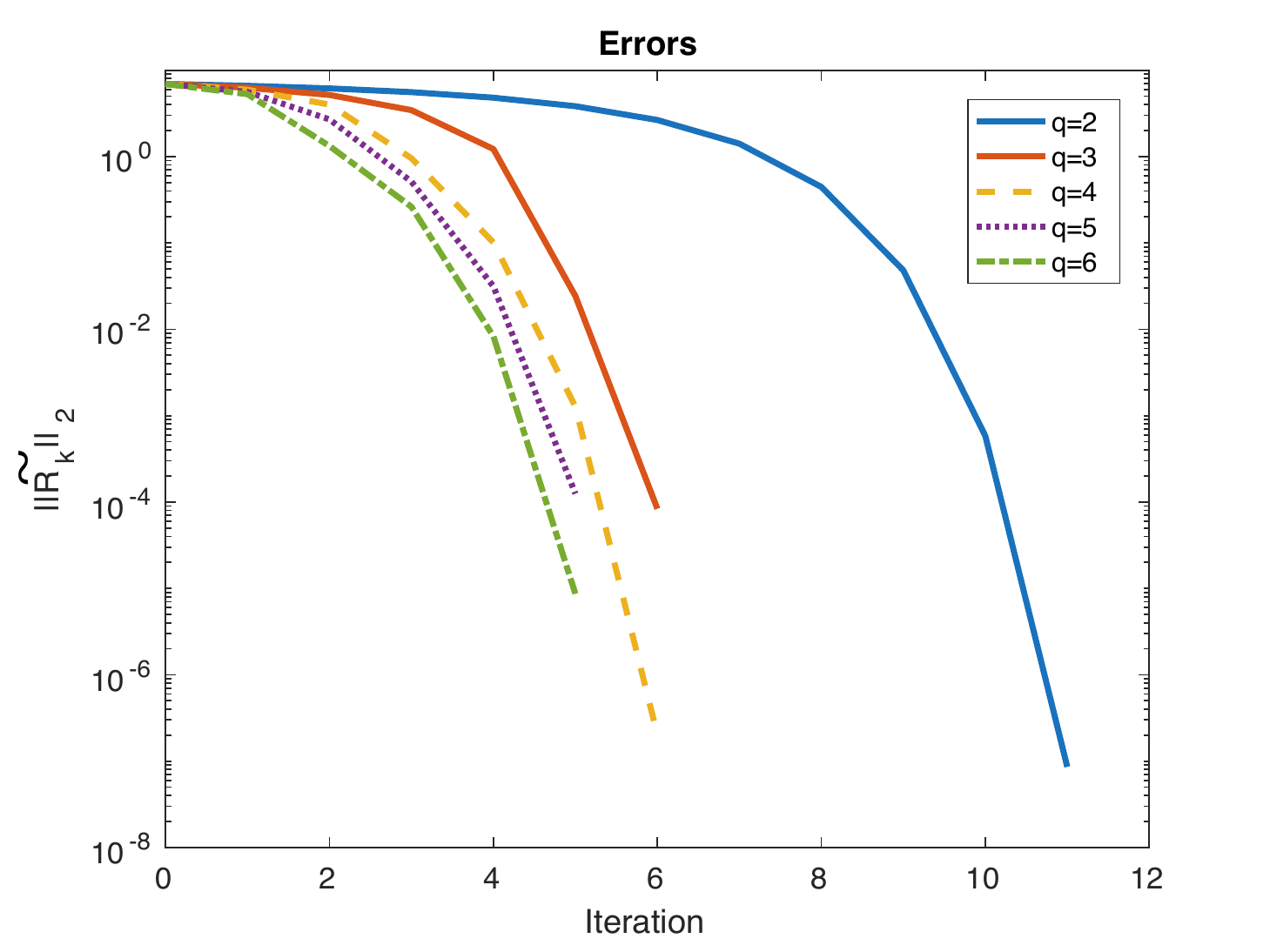}
\caption[Errors for $\revision{\kappa}=500$, $p=3$ and $d=0.003$]{Errors for $\revision{\kappa}=500$, $p=3$ and $d=0.003$.}
\label{fig:p3}
\end{figure}

As \revision{shown} in Table~\ref{c500_1}, for $p=1$, \revision{the number of iterations is lowest for $q=6$, but concerning the number of matrix-matrix multiplications $q=3$ is the best}.
\revision{The situation is similar for $p=2$ and $p=3$, where $q=3$ is optimal.}
As \revision{shown} in Table~\ref{c500_4}, for $p=4$, the number of iterations, \revision{and the number of matrix-matrix multiplications} are minimal for $q=4$.
\revision{This shows that for larger $p$, higher values for $q$ become beneficial.}
In general, like in the scalar case, a larger $q$ causes the iteration scheme to reach quadratic convergence after \secrevision{fewer} iterations as \revision{shown} in Fig.~\ref{fig:p3} for $p=3$.
\revision{Note that although the norm of the residual $R_k$ is used as a convergence criterion, in Fig.~\ref{fig:p3} we show the norm of the error compared to the correct solution $\tilde{R_k}$.}
\revision{For matrices with lower condition numbers, overall \secrevision{fewer} iterations are required as shown in Tables~\ref{c10_1} and~\ref{c10_4}. As a consequence, lower values for $q$ appear to be optimal. For $p=1$ the optimal value is $q=2$, for $p=4$ it is $q=3$.}

\begin{table}[tb]
  \centering
  \caption{\label{tab:cond}\revision{Achievable precision and number of required iterations for matrices with large condition numbers.}}
  \begin{tabular}{rrrrrr}
    \hline\hline
    $\kappa$ & $p$ & $q$ & final residual $\norm{R}_2$ & final error $\norm{\tilde R}_2$ & \#it\Tstrut\Bstrut\\
    \hline
    \multirow{4}{*}{\secrevision{$10^3$}} & \multirow{2}{*}{1} & 2 & \secrevision{$4.05\times10^{-14}$} & \secrevision{$2.86\times10^{-11}$} & 16\Tstrut\Bstrut\\
    \cline{3-6}
    && 6 & \secrevision{$4.08\times10^{-14}$} & \secrevision{$2.00\times10^{-11}$} & 7\Tstrut\Bstrut\\
    \cline{2-6}
    & \multirow{2}{*}{4} & 2 & \secrevision{$3.57\times10^{-05}$} & \secrevision{$7.77\times10^{-06}$} & 11\Tstrut\Bstrut\\
    \cline{3-6}
    && 6 & \secrevision{$1.88\times10^{-06}$} & \secrevision{$9.28\times10^{-09}$} & 6\Tstrut\Bstrut\\
    \hline\hline
    \multirow{4}{*}{\secrevision{$10^6$}} & \multirow{2}{*}{1} & 2 & \secrevision{$3.02\times10^{-11}$} & \secrevision{$1.17\times10^{-05}$} & 26\Tstrut\Bstrut\\
    \cline{3-6}
    && 6 & \secrevision{$1.67\times10^{-11}$} & \secrevision{$8.84\times10^{-06}$} & 11\Tstrut\Bstrut\\
    \cline{2-6}
    & \multirow{2}{*}{4} & 2 & \secrevision{$9.82\times10^{-01}$} & \secrevision{$2.00\times10^{+01}$} & 11\Tstrut\Bstrut\\
    \cline{3-6}
    && 6 & \secrevision{$3.62\times10^{-01}$} & \secrevision{$2.36\times10^{\pm00}$} & 5\Tstrut\Bstrut\\
    \hline\hline
    \multirow{4}{*}{\secrevision{$10^9$}} & \multirow{2}{*}{1} & 2 & \secrevision{$3.55\times10^{-08}$} & \secrevision{$3.48\times10^{+01}$} & 34\Tstrut\Bstrut\\
    \cline{3-6}
    && 6 & \secrevision{$2.42\times10^{-08}$} & \secrevision{$1.41\times10^{+01}$} & 15\Tstrut\Bstrut\\
    \cline{2-6}
    & \multirow{2}{*}{4} & 2 & \secrevision{$1.00\times10^{\pm00}$} & \secrevision{$1.66\times10^{+02}$} & 11\Tstrut\Bstrut\\
    \cline{3-6}
    && 6 & \secrevision{$1.00\times10^{\pm00}$} & \secrevision{$1.52\times10^{+02}$} & 4\Tstrut\Bstrut\\
    \hline\hline
  \end{tabular}
\end{table}

\revision{To evaluate the impact of much larger condition numbers, we used matrices with $\kappa\in\{\secrevision{10^3, 10^6, 10^9}\}$ for different combinations of $p$ and $q$. All calculations were performed using double precision arithmetic. The results are shown in Table~\ref{tab:cond}. As can be seen, the final precision decreases for higher condition numbers and the number of
required iteration increases. Using higher values for $q$ not only reduces the number of iterations, but also increases the final precision in all cases.}

\revision{Overall it shows that the densitiy of the matrix has no influence on the choice of an optimal $q$ but for higher $p$ and for higher condition number $\kappa$, increasing values of $q$ can provide a reduction not only in the number of iterations but also the number of matrix-matrix multiplications and increase the precision of the final result.}

\subsection{General Matrices and Applications}
For many applications in chemistry and physics, the most interesting are sparse
matrices with an arbitrary spectral radius. Often, to obtain an algorithm that scales linearly with the number of rows/columns of the relevant matrices, it is crucial to have sparse matrices.
However, typically these matrices do have a spectral radius that is larger than $1$, which is in contrast with the matrices we have considered so far, where $\rho(A) < 1$.

As already discussed, for $q=2$, the iteration function as obtained by Eq.~\revision{(\ref{eq:def_pro})} is identical to Bini's iteration scheme of Eq.~\revision{(\ref{eqn:q2p-1})}. Nevertheless, our numerical investigations suggest that the analogue of Proposition~\ref{prop} from Section~\ref{sec:level2a} may also hold for Eq.~\revision{(\ref{eq:def_pro})} for $q \neq 2$.
In any case, in order to also deal with matrices whose spectral radius is larger than $1$, one can either scale the matrix such that \revision{it} has a spectral radius $\rho(A) < 1$, or choose the matrix $B_0$ so that $\norm{I - B_0^pA}_{\revision{2}} < 1$ is satisfied.
With respect to the latter, for the widely-used Newton-Schulz iteration, employing 
\begin{align}
B_0 = (\norm{A}_1\norm{A}_{\infty})^{-1}A^\mathsf{T} \label{eq:pan}
\end{align}
as start matrix, convergence is guaranteed \cite{pan}. However, here we solve the problem for an arbitrary \revision{$p$} by the following

\begin{prop}
\label{pr}
Let $A$ be a symmetric positive definite matrix with $\rho(A) \geq 1$ and $B_0$ like in Eq.~\eqref{eq:pan}. Then, $\norm{I - B_0^pA}_2 < 1$ is guaranteed. 
\end{prop}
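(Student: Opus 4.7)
The plan is to exploit that $A$ is SPD to collapse the definition of $B_0$ into something simple, then reduce the whole inequality to an eigenvalue computation.

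First I would note that since $A$ is symmetric, $A^{\mathsf T}=A$ and $\|A\|_1=\|A\|_\infty$, so the start matrix reduces to $B_0 = A/\|A\|_1^2$. In particular $B_0$ commutes with $A$, the product $B_0^p A = A^{p+1}/\|A\|_1^{2p}$ is again symmetric positive definite, and $I-B_0^p A$ is symmetric. Consequently $\|I-B_0^p A\|_2$ equals its spectral radius, so it suffices to control the eigenvalues $1-\lambda_i^{p+1}/\|A\|_1^{2p}$, where $\lambda_1\geq\cdots\geq\lambda_n>0$ are the eigenvalues of $A$.

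Next I would show that each quantity $\mu_i := \lambda_i^{p+1}/\|A\|_1^{2p}$ lies in $(0,1]$. Strict positivity is immediate from $\lambda_i>0$. For the upper bound I would chain the standard norm inequalities $\lambda_i\leq\rho(A)=\|A\|_2\leq\sqrt{\|A\|_1\|A\|_\infty}=\|A\|_1$, which holds because of symmetry. Combined with the hypothesis $\rho(A)\geq 1$, this also delivers $\|A\|_1\geq 1$, so that $\|A\|_1^{p+1}\leq\|A\|_1^{2p}$ for every $p\in\mathbb{N}\setminus\{0\}$. Putting the two estimates together yields $\lambda_i^{p+1}\leq\|A\|_1^{p+1}\leq\|A\|_1^{2p}$, i.e.\ $\mu_i\leq 1$.

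Since every $\mu_i$ satisfies $0<\mu_i\leq 1$, the eigenvalues of $I-B_0^p A$ all lie in $[0,1)$, the strictness coming from $\mu_n>0$ (which in turn uses positive definiteness). Taking the maximum modulus gives $\|I-B_0^p A\|_2 = 1-\mu_n < 1$, as claimed.

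The only delicate point is verifying that the exponent manipulation $p+1\leq 2p$ combined with $\|A\|_1\geq 1$ really does absorb the $\lambda_i^{p+1}$ term; this is where both the SPD assumption (to get $\lambda_i\leq\|A\|_1$ via the standard $\|A\|_2\leq\sqrt{\|A\|_1\|A\|_\infty}$ bound) and the assumption $\rho(A)\geq 1$ (to get $\|A\|_1\geq 1$) are used in tandem, and each hypothesis is individually necessary.
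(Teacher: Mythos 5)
Your proof is correct and takes essentially the same route as the paper: reduce to showing that the symmetric positive definite matrix $B_0^pA$ has spectral norm at most one, using $\norm{A}_2 \leq \sqrt{\norm{A}_1\norm{A}_\infty}$ together with $\rho(A)\geq 1$ to absorb the exponents, and conclude $\norm{I-B_0^pA}_2<1$ from the eigenvalues lying in $(0,1]$. The only differences are cosmetic: you inline the paper's Lemma~\ref{lemme} via an explicit diagonalization and use $\norm{A}_1=\norm{A}_\infty$ for symmetric $A$, whereas the paper keeps the product $\norm{A}_1\norm{A}_\infty$ and bounds it below by $\lambda_{\max}^2$.
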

\begin{proof}
As $A$ is symmetric positive definite, we have $\norm{A}_2 = \lambda_{\max}$. Due to the fact that $\norm{A}_2 \leq \sqrt{\norm{A}_1 \norm{A}_{\infty}}$ and making use of Lemma~\ref{lemme}, we thus have to show that 
\begeq{\norm{B_0^pA}_2 = \norm{\left((\norm{A}_1\norm{A}_{\infty})^{-1}A^\mathsf{T}\right)^pA}_2 \leq 1.}
Since $A$ is symmetric, we have $A^\mathsf{T} = A$ and therefore commutativity of $B_0$ and $A$. Furthermore, the relation
\begeq{\left(\norm{A}_1 \norm{A}_{\infty} \right)^{-1} \leq \frac{1}{\lambda^2_{\max}}}
holds, so that eventually 
\begin{align*}
\norm{B_0^pA}_2 &\leq\frac{1}{(\lambda^2_{\max})^p}\norm{A^{p+1}}_2 \leq \frac{1}{\lambda^{2p}_{\max}} \norm{A}_2^{p+1} \nonumber \\
&= \frac{1}{\lambda^{2p}_{\max}}\lambda^{p+1}_{\max} = \frac{1}{\lambda^{p-1}_{\max}} \leq 1, 
\end{align*}
as we deal with matrices with $\rho(A) \geq 1$.
\end{proof}

\begin{rem}
By replacing $A^\mathsf{T}$ with $A^*$ in Eq.~\revision{(\ref{eq:pan})}, \textup{Proposition \ref{pr}} is also true for Hermitian positive definite matrices.
\end{rem}

\secrevision{Consequently, the initial guess of Eq.~\revision{(\ref{eq:pan})} is not only suited for the Newton-Schulz iteration, but for all cases in which the condition $\norm{I - B_0^pA}_2 < 1$ suffices to guarantee convergence (cf. Tab.~\ref{tab:overview}).}

In what follows, we perform calculations using matrices $A$ that have the same densities and condition numbers as in the case $\rho(A) < 1$. We scale the matrices such that $\rho(A) \in \{10, 50\}$ and use the initial guess of Eq.~\revision{(\ref{eq:pan})}.
\revision{In contrast to the results from Sec.~\ref{sec:level4b}, we see a slight effect of the density $d$ on the optimal choice of $q$. Hence, we evaluate the influence of different densities as well}.

As an example, we study matrices \revision{with} $\revision{\kappa}=500$ \revision{and} spectral radius $\rho(A) = 10$ for the case $p=3$. As \revision{shown} in Table~\ref{rho10}, the number of matrix-matrix multiplications is always the lowest for $q=5$, even though the number of iterations is minimal for $q=6$.
\begin{table}[bt]
\centering
  \begin{minipage}[t]{0.475\textwidth}
\centering
\caption{Numerical results for the case $p=3$, $\rho(A)=10$ and $\revision{\kappa}=500$. Optimal values are shown in bold.\vspace{5mm}}
\label{rho10}
\begin{tabular}{crrr}
\hline\hline
$d$ & $q$ & \#it & \#mult\Tstrut\Bstrut\\
\hline
& 2 & 55 & 223\Tstrut\\
& 3 & 23 & 118 \\
0.003 & 4 & 18 & 111 \\
& \textbf{5} & 15 & \textbf{108} \\
& 6 & \textbf{14} & 115\Bstrut\\
\hline
& 2 & 56 & 227\Tstrut\\
& 3 & 23 & 118 \\
0.01 & 4 & 18 & 111 \\
& \textbf{5} & 15 & \textbf{108} \\
& 6 & \textbf{14} & 115\Bstrut\\
\hline
& 2 & 57 & 231\Tstrut\\
& 3 & 25 & 128 \\
0.1 & 4 & 19 & 117 \\
& \textbf{5} & 16 & \textbf{115} \\
& 6 & \textbf{15} & 123\Bstrut\\
\hline
& 2 & 62 & 251\Tstrut\\
& 3 & 27 & 138 \\
0.8 & 4 & 21 & \textbf{129} \\
& \textbf{5} & 18 & \textbf{129} \\
& 6 & \textbf{16} & 131\Bstrut\\
\hline\hline
\end{tabular}
\end{minipage}\begin{minipage}[t]{0.05\textwidth}\hfill\end{minipage}\begin{minipage}[t]{0.475\textwidth}
\centering
\caption{Numerical results for the case $p=4$, $\rho(A)=50$ and $\revision{\kappa}=10$. Optimal values are shown in bold.\vspace{5mm}}
\label{rho50}
\begin{tabular}{crrr}
\hline\hline
$d$ & $q$ & \#it & \#mult\Tstrut\Bstrut\\
\hline
& 2 & 32 & 164\Tstrut\\
& 3 & 18 & 112 \\
0.003 & 4 & 14 & 102 \\
& \textbf{5} & 12 & \textbf{100} \\
& 6 & \textbf{11} & 103\Bstrut\\
\hline
& 2 & 34 & 174\Tstrut\\
& 3 & 19 & 118 \\
0.01 & 4 & 15 & 109 \\
& \textbf{5} & 13 & \textbf{108} \\
& 6 & \textbf{12} & 112\Bstrut\\
\hline
& 2 & 37 & 189\Tstrut\\
& 3 & 21 & 130 \\
0.1 & 4 & 16 & 116 \\
& 5 & 14 & 116 \\
& \textbf{6} & \textbf{12} & \textbf{112}\Bstrut\\
\hline
& 2 & 43 & 219\Tstrut\\
& 3 & 24 & 148 \\
0.8 & 4 & 18 & \textbf{130} \\
& 5 & 16 & 132 \\
& \textbf{6} & \textbf{14} & \textbf{130}\Bstrut\\
\hline\hline
\end{tabular}
\end{minipage}
\end{table}
However, the decision of an optimal $q$ is not always as simple as in the case demonstrated above. One such instance is \revision{shown in Table~\ref{rho50}, where the inverse fourth root of matrices with spectral radius $\rho(A) = 50$ and condition number $\revision{\kappa}=10$ is computed}. In this case, we observe that for sparse matrices the choice $q=5$ is the best, while for more dense matrices $q=6$ is optimal.

For matrices with a larger spectral radius, we did not observe any configuration where $q=2$ is the best choice. On the contrary, as can be seen in Tables \ref{rho10} and \ref{rho50}, the evaluation of the inverse $p$-th root with $q=2$ generally requires up to two times the computational effort and number of \revision{matrix-matrix multiplications}, as well as up to three times the number of iterations with respect to the optimal choice of $q$.
In general, the larger $p$, the more profitable is a bigger value for $q$.

\revision{Although our initial value from Eq.~(\ref{eq:pan}) allows us to
operate on matrices with spectral radius $\rho(A)\geq1$, we observe that
significantly more iterations are required than for the matrices evaluated in
Sec.~\ref{sec:level4b}. It should be noted that the alternative approach by
scaling the matrices such that $\rho(A)<1$ and then running the iterative method
with $B_0 = I$ would require \secrevision{fewer} iteration in these test cases, but instead
requires the calculation of $\rho(A)$ and a proper scaling factor.}

\section{Conclusion}
We presented a new general algorithm to construct iteration functions to calculate the inverse principal $p$-th root of symmetric positive definite matrices. It includes, as special cases, the methods of Altman \cite{altm} and Bini et al. \cite{bini}. The variable $q$, that in Altman's work equals the order of convergence for the iterative inversion of matrices, in our scheme represents the order of expansion. We find that $q>2$ does not lead to a higher order of convergence if $p\neq1$, but that the iteration converges within \secrevision{fewer} iterations and matrix-matrix multiplications, as quadratic convergence is reached faster. For an optimally chosen value for $q$, the computational effort and the number of matrix-matrix multiplications is up to two times lower, and the number of iterations up to \revision{four} times lower \revision{compared} to $q=2$.

\section*{Acknowledgments}
This work was supported by the Max Planck Graduate Center with the Johannes Guten\-berg-Universit\"at Mainz (MPGC) and the IDEE project of the Carl Zeiss Foundation. The authors thank Martin Hanke-Bourgeois for valuable suggestions and Stefanie Hollborn for critically reading the manuscript. This project has received funding from the European Research Council (ERC) under the European Union's Horizon 2020 research and innovation programme (grant agreement No 716142).

\bibliographystyle{unsrt}
\bibliography{paper}

\end{document}